\newtheorem{theorem}{Theorem}[section]
\newtheorem{remark}{Remark}[section]
\newtheorem{lemma}{Lemma}[section]
\newtheorem{corollary}{Corollary}[section]
\newtheorem{example}{Example}[section]
\numberwithin{equation}{section}
\begin{document}

\begin{frontmatter}

\title{Strong convergence of an inertial Tseng's extragradient algorithm for  pseudomonotone variational inequalities with applications to optimal control problems}


\author[mymainaddress]{Bing Tan}
\ead{bingtan72@gmail.com}


\author[mythirdaddress]{Xiaolong Qin\corref{mycorrespondingauthor}}
\cortext[mycorrespondingauthor]{Corresponding author}
\ead{qxlxajh@163.com}

\address[mymainaddress]{Institute of Fundamental and Frontier Sciences, University of Electronic Science and Technology of China, Chengdu, China}
\address[mythirdaddress]{Department of Mathematics, Zhejiang Normal University, Zhejiang, China}

\begin{abstract}
We investigate an inertial viscosity-type Tseng's extragradient algorithm with a new step size to solve  pseudomonotone variational inequality problems in real Hilbert spaces. A strong convergence theorem of the algorithm is obtained without the prior information of the Lipschitz constant of the operator and also without any requirement of additional projections. Finally, several computational tests are carried out to demonstrate the reliability and benefits of the algorithm and compare it with the existing ones. Moreover, our algorithm is also applied to solve the variational inequality problem that appears in optimal control problems. The algorithm presented in this paper improves some known results in the literature.
\end{abstract}

\begin{keyword}
Pseudomonotone variational inequality \sep Tseng's extragradient method \sep inertial method \sep viscosity method \sep optimal control problem
\MSC[2010] 47H05 \sep 47H09 \sep 65K15   \sep 47J20
\end{keyword}

\end{frontmatter}

\section{Introduction}
The goal of this study is to investigate a fast iterative method for discovering a solution to the variational inequality problem (in short, VIP).  In this paper, one always assumes that $ H $ is a real Hilbert space with $\langle\cdot, \cdot\rangle$ and the induced norm $\|\cdot\|$, and $ C $ is a  closed and convex nonempty subset in $ H $. Let us first elaborate on the issues involved in this research as follows:
\begin{equation*}\label{VIP}
\text{find } y^{*} \in C \text { such that }\langle \mathcal{A} y^{*}, z-y^{*}\rangle \geq 0,\quad \forall z \in C\,,\tag{VIP}
\end{equation*}
where $ \mathcal{A}: H \rightarrow H $ is a  nonlinear mapping. We denote the solution set of~\eqref{VIP} as $ \mathrm{VI}(C,\mathcal{A}) $.

Variational inequalities are powerful tools and models in applied mathematics and act an essential role in society, optimization, economics, transportation, mathematical programming,  engineering mechanics, and other fields (see, for instance, \cite{QA,SYVS,AIY}).  In the last decades, various effective solution methods have been investigated and developed to solve the problems of type \eqref{VIP}; see, e.g., \cite{Cho2,SIna,tanjnca} and the references therein. It should be pointed out that these approaches usually require that   mapping $ \mathcal{A} $ has certain monotonicity. In this paper, we consider that the mapping $ \mathcal{A} $  associated with \eqref{VIP} is pseudomonotone (see the definition below), which is a broader category than monotone mappings.

Let us review some nonlinear mappings in nonlinear analysis for further use. For any elements $ p, q \in H $, one recalls that a mapping $\mathcal{A}: {H} \rightarrow {H}$ is said to be:
\begin{enumerate}[label=(\arabic*)]
\item  \emph{$\eta$-strongly monotone} if there is a positive number $ \eta $ such that
\[
\langle \mathcal{A} p-\mathcal{A} q, p-q\rangle \geq \eta\|p-q\|,
\]
\item \emph{$\eta$-inverse strongly monotone} if there is a positive number $ \eta $ such that
\[
\langle \mathcal{A} p-\mathcal{A} q, p-q\rangle \geq \eta\|\mathcal{A} p-\mathcal{A} q\|^{2},
\]
\item \emph{monotone} if
\[
\langle \mathcal{A} p-\mathcal{A} q, p-q\rangle \geq 0,
\]
\item \emph{$\eta$-strongly pseudomonotone} if there is a positive number $ \eta $ such that
\[
\langle \mathcal{A} p, q-p\rangle \geq 0 \Longrightarrow\langle \mathcal{A} q, q-p\rangle \geq \eta\|p-q\|^{2},
\]
\item \emph{pseudomonotone} if
\[
\langle \mathcal{A} p, q-p\rangle \geq 0 \Longrightarrow\langle \mathcal{A} q, q-p\rangle \geq 0,
\]
\item \emph{$ L $-Lipschitz continuous} if there is $ L >0$ such that
\[
\|\mathcal{A} p-\mathcal{A} q\| \leq L\|p-q\|,
\]
\item \emph{sequentially weakly
continuous} if for any sequence $ \{p_{n}\} $ weakly converges to a point $ p \in {H} $,  $ \left\{\mathcal{A}p_{n}\right\} $ weakly converges to $ \mathcal{A}p $.
\end{enumerate}
It can be easily checked that the following  relations: $(1)\Longrightarrow(3) \Longrightarrow(5)$ and $(1)\Longrightarrow(4) \Longrightarrow(5)$. Note that the opposite statement is generally incorrect. Recall that a mapping ${P}_{{C}}: {H} \rightarrow {C}$ is called the metric projection from ${H}$ onto $C$, if for all $x \in H$, there is a unique nearest point in $ C $, which is represented by $P_{C}(x)$, such that $P_{C}x:= \operatorname{argmin}\{\|x-y\|,\, y \in C\}$.

The oldest and simplest projection approach to solve variational inequality problems is the projected-gradient method, which  reads as follows:
\begin{equation*}\label{PGM}
x_{n+1}=P_{C}\left(x_{n}-\gamma \mathcal{A} x_{n}\right), \quad \forall n \geq 1\,,
\tag{\text{PGM}}
\end{equation*}
where $ P_{C} $ represents the metric projection   onto $ C $, mapping $ \mathcal{A}$  is $ L $-Lipschitz continuous and $ \eta $-strongly monotone and the step size $ \gamma \in(0, {2 \eta}/{L^{2}}) $. Then the iterative sequence $ \{x_{n}\} $ defined by~\eqref{PGM} converges to the solution of~\eqref{VIP} provided that $ \mathrm{VI}(C,\mathcal{A}) $ is nonempty. It should be noted that the iterative sequence $ \{x_{n}\} $ formulated by~\eqref{PGM} does not necessarily converge when mapping $ \mathcal{A} $ is ``only" monotone.  Recently, Malitsky~\cite{PRGM} introduced a projected reflected gradient method, which can be viewed as an improvement of \eqref{PGM}. Indeed, the sequence generated by this method is as follows:
\begin{equation*}\label{PRGM}
x_{n+1}=P_{C}\left(x_{n}-\gamma \mathcal{A}\left(2 x_{n}-x_{n-1}\right)\right), \quad \forall n \geq 1\,.
\tag{\text{PRGM}}
\end{equation*}
He proved that the sequence $ \{x_{n}\} $ created by iterative scheme \eqref{PRGM} converges to $ u \in \mathrm{VI}(C,\mathcal{A}) $ when the mapping $ \mathcal{A} $ is monotone. Further extensions of \eqref{PRGM} can be found in \cite{PRGM1,PRGM2}.

In many kinds of research on solving variational inequalities controlled by pseudomonotone and Lipschitz continuous operators, the most commonly used algorithm is the extragradient method (see~\cite{EGM}) and its variants. Indeed, Korpelevich proposed the extragradient method (EGM) in~\cite{EGM} to find the solution of the saddle point problem in finite-dimensional spaces. The details of EGM are described as follows:
\begin{equation*}\label{EGM}
\left\{\begin{aligned}
&y_{n}=P_{C}\left(x_{n}-\gamma \mathcal{A} x_{n}\right), \\
&x_{n+1}=P_{C}\left(x_{n}-\gamma \mathcal{A} y_{n}\right), \quad \forall n \geq 1\,,
\end{aligned}\right.
\tag{\text{EGM}}
\end{equation*}
where mapping $\mathcal{A}$ is $ L $-Lipschitz continuous monotone and fixed step size $\gamma \in(0, 1/L)$. Under the condition of $ \mathrm{VI}(C, \mathcal{A}) \ne \emptyset$, the iterative sequence $ \{x_{n}\} $ defined by \eqref{EGM} converges to an element of $ \mathrm{VI}(C, \mathcal{A}) $. In the past few decades, EGM has been considered and extended by many authors for solving \eqref{VIP} in infinite-dimensional spaces, see, e.g., \cite{SILD,SLD,tanarxiv} and the references therein. Recently, Vuong~\cite{EGMpVIP} extended EGM to solve pseudomonotone variational inequalities in Hilbert spaces, and proved that the iterative sequence constructed by the algorithm converges weakly to a solution of~\eqref{VIP}. On the other hand, it is not easy to calculate the projection on the general closed convex set $ C $, especially when $ C $ has a complex structure. Note that in the extragradient method, two projections need to be calculated on the closed convex set $ C $ for each iteration, which may severely affect the computational performance of the algorithm used.

Next, we introduce two types of methods to enhance the numerical efficiency of EGM. The first approach is the Tseng's extragradient method (referred to as TEGM, also known as the forward-backward-forward method) proposed by Tseng~\cite{tseng}. The advantage of this method is that the projection on the feasible set only needs to be calculated once in each iteration. More precisely, TEGM is expressed as follows:
\begin{equation*}\label{TEGM}
\left\{\begin{aligned}
&y_{n}=P_{C}\left(x_{n}-\gamma \mathcal{A} x_{n}\right)\,, \\
&x_{n+1}=y_{n}-\gamma\left(\mathcal{A} y_{n}-\mathcal{A} x_{n}\right), \quad \forall n \geq 1\,,
\end{aligned}\right.
\tag{\text{TEGM}}
\end{equation*}
where mapping $\mathcal{A}$ is $ L $-Lipschitz continuous monotone and fixed step size $\gamma \in(0, 1/L)$. Then the iterative sequence $ \{x_{n}\} $ formulated by \eqref{TEGM} converges to a solution of \eqref{VIP} provided that $ \mathrm{VI}(C, \mathcal{A}) $ is nonempty. Very recently, Bot, Csetnek and Vuong in their recent work~\cite{BotpVIP} proposed a Tseng's forward-backward-forward algorithm for solving pseudomonotone variational inequalities in Hilbert spaces and performed an asymptotic analysis of the formed trajectories. The second method is the subgradient extragradient method (SEGM) proposed by Censor, Gibali and Reich~\cite{SEGM}. This can be regarded as a modification of EGM. Indeed, they  replaced the second projection in \eqref{EGM} by a projection onto a half-space. SEGM is calculated as follows:
\begin{equation*}\label{SEGM}
\left\{\begin{aligned}
&y_{n}=P_{C}\left(x_{n}-\gamma \mathcal{A} x_{n}\right)\,, \\
&T_{n}=\left\{x \in H \mid \langle x_{n}-\gamma \mathcal{A} x_{n}-y_{n}, x-y_{n}\rangle \leq 0\right\}\,, \\
&x_{n+1}=P_{T_{n}}\left(x_{n}-\gamma \mathcal{A} y_{n}\right), \quad \forall n \geq 1\,,
\end{aligned}\right.
\tag{\text{SEGM}}
\end{equation*}
where mapping $\mathcal{A}$ is $ L $-Lipschitz continuous monotone and fixed step size $\gamma \in(0, 1/L)$. SEGM not only converges to monotone variational inequalities (see~\cite{CGR1}), but also to pseudomonotone variational inequalities (see~\cite{CGR2,TSI}).

It is worth mentioning that   \eqref{EGM}, \eqref{TEGM} and \eqref{SEGM} are   weakly convergent in infinite-dimensional Hilbert spaces. Some practical problems that occur in the fields of image processing, quantum mechanics, medical imaging and machine learning need to be modeled and analyzed in infinite-dimensional space. Therefore, strong convergence results are preferable to weak convergence results in infinite-dimensional space. Recently, Thong and Vuong~\cite{MaTEGM} introduced the modified Mann-type Tseng's extragradient method to solve the \eqref{VIP} involving a pseudomonotone mapping in Hilbert spaces. Their method uses an Armijo-like line search to eliminate the reliance on the Lipschitz continuous constant of the mapping $ \mathcal{A} $. Indeed, the proposed algorithm is stated as follows:
\begin{equation*}\label{MaTEGM}
\left\{\begin{aligned}
&y_{n}=P_{C}\left(x_{n}-\gamma_{n} \mathcal{A} x_{n}\right)\,,\\
&z_{n}=y_{n}-\gamma_{n}\left(\mathcal{A} y_{n}-\mathcal{A} x_{n}\right)\,,\\
&x_{n+1}=\left(1-\varphi_{n}-\tau_{n}\right) x_{n}+\tau_{n} z_{n}, \quad \forall n \geq 1\,,
\end{aligned}\right.
\tag{\text{MaTEGM}}
\end{equation*}
where the mapping $ \mathcal{A} $ is pseudomonotone, sequentially weakly continuous on $ C $ and uniformly continuous on bounded subsets of $ H $, $\left\{\varphi_{n}\right\}$, $\left\{\tau_{n}\right\}$ are two real positive sequences in $ (0,1) $ such that $\left\{\tau_{n}\right\} \subset \left(a, 1-\varphi_{n}\right)$ for some $a>0$ and $\lim _{n \rightarrow \infty} \varphi_{n}=0$, $ \sum_{n=1}^{\infty} \varphi_{n}=\infty$, $\gamma_{n}:=\alpha \ell^{q_{n}}$ and $q_{n}$ is the smallest non-negative integer $ q $ satisfying $\alpha \ell^{q}\left\|\mathcal{A} x_{n}-\mathcal{A} y_{n}\right\| \leq \phi\left\|x_{n}-y_{n}\right\|$ ($ \alpha>0$, $\ell \in(0,1)$, $\phi \in(0,1) $). They showed that the iteration scheme formed by \eqref{MaTEGM} converges strongly to an element $ u $ under $ \mathrm{VI}(C,\mathcal{A}) \ne \emptyset $, where $ u =\arg \min \{\|z\|: z \in \mathrm{VI}(C,\mathcal{A})\} $.

To accelerate the convergence rate of the algorithms, in 1964, Polyak \cite{inertial} considered the second-order dynamical system $\ddot{x}(t)+\gamma \dot{x}(t)+\nabla f(x(t))=0$, where $\gamma>0$, $\nabla f$ represents the gradient of $ f $, $ \dot{x}(t) $ and $ \ddot{x}(t) $ denote the first and second derivatives of $ x $ at $ t $, respectively. This dynamic system is called the Heavy Ball with Friction (HBF).

Next, we consider the discretization of this dynamic system (HBF), that is,
\[
\frac{x_{n+1}-2 x_{n}+x_{n-1}}{h^{2}}+\gamma \frac{x_{n}-x_{n-1}}{h}+\nabla f\left(x_{n}\right)=0, \quad \forall n \geq 0\,.
\]
Through a direct calculation, we can get the following form:
\[
x_{n+1}=x_{n}+\tau\left(x_{n}-x_{n-1}\right)-\varphi \nabla f\left(x_{n}\right), \quad \forall n \geq 0\,,
\]
where $\tau=1-\gamma h$ and $\varphi=h^{2}$. This can be considered as the following two-step iteration scheme:
\[
\left\{\begin{aligned}
&y_{n}=x_{n}+\tau\left(x_{n}-x_{n-1}\right)\,, \\
&x_{n+1}=y_{n}-\varphi \nabla f\left(x_{n}\right), \quad \forall n \geq 0\,.
\end{aligned}\right.
\]
This iteration is now called the inertial extrapolation algorithm, the term $\tau \left(x_{n}-x_{n-1}\right)$ is referred to as the extrapolation point. In recent years,  inertial technology as an acceleration method has attracted extensive research in the optimization community. Many scholars have built various fast numerical algorithms by employing the inertial technology.  These algorithms have shown advantages in theoretical and computational experiments and have been successfully applied to many problems, see, for instance, \cite{FISTA,GHjfpta,zhoucoam} and the references therein.

Very recently, inspired by the inertial method, the SEGM and the viscosity  method, Thong, Hieu and Rassias~\cite{ViSEGM} presented a viscosity-type inertial subgradient extragradient algorithm to solve pseudomonotone \eqref{VIP} in Hilbert spaces. The algorithm is of the form:
\begin{equation}\label{ViSEGM}
\left\{\begin{aligned}
&s_{n}=x_{n}+\delta_{n}\left(x_{n}-x_{n-1}\right)\,, \\
&y_{n}=P_{C}\left(s_{n}-\gamma_{n} \mathcal{A} s_{n}\right) \,,\\
&T_{n}=\left\{x \in H \mid \langle s_{n}-\gamma_{n} \mathcal{A} s_{n}-y_{n}, x-y_{n}\rangle \leq 0\right\}\,, \\
&z_{n}=P_{T_{n}}\left(s_{n}-\gamma_{n} \mathcal{A} y_{n}\right)\,,\\
&x_{n+1}=\varphi_{n} f\left(z_{n}\right)+\left(1-\varphi_{n}\right) z_{n}, \quad \forall n\geq 1\,.\\
&\gamma_{n+1}=\left\{\begin{array}{ll}
\min \left\{\frac{\phi\left\|s_{n}-y_{n}\right\|}{\left\|\mathcal{A} s_{n}-\mathcal{A} y_{n}\right\|}, \gamma_{n}\right\}, & \text { if } \mathcal{A} s_{n}-\mathcal{A} y_{n} \neq 0; \\
\gamma_{n}, & \text { otherwise},
\end{array}\right.
\end{aligned}\right.
\tag{\text{ViSEGM}}
\end{equation}
where the mapping $ \mathcal{A} $ is  pseudomonotone, $ L $-Lipschitz continuous, sequentially weakly continuous on $ C $, and the inertia parameters $ \delta_{n} $ are updated in the following ways:
\[
\delta_{n}=\left\{\begin{array}{ll}
\min \bigg\{\dfrac{\epsilon_{n}}{\left\|x_{n}-x_{n-1}\right\|}, \delta\bigg\}, & \text { if } x_{n} \neq x_{n-1}; \\
\delta, & \text { otherwise}.
\end{array}\right.
\]
Note that the Algorithm~\eqref{ViSEGM} uses a simple step size rule, which is generated through some calculations  of previously known information in each iteration. Therefore, it can work well without the prior information of the Lipschitz constant of the mapping $ \mathcal{A} $. They confirmed the strong convergence of~\eqref{ViSEGM} under mild assumptions on cost mapping and parameters.

Motivated and stimulated by the above works,  we introduce a new inertial Tseng's extragradient algorithm with a new step size for solving the pseudomonotone \eqref{VIP} in Hilbert spaces. The advantages of our algorithm are: (1) only one projection on the feasible set needs to be calculated in each iteration; (2) do not require to know the prior information of the Lipschitz constant of the cost mapping; (3) the addition of inertial makes it have faster convergence speed. Under mild assumptions, we confirm a strong convergence theorem of the suggested algorithm. Lastly, some computational tests appearing in finite and infinite dimensions are proposed to verify our theoretical results. Furthermore, our algorithm is also designed to solve optimal control problems. Our algorithm improves some existing results \cite{MaTEGM,ViSEGM,THna,YLNA,FQOPT2020}.

The organizational structure of our paper is built up as follows. Some essential definitions and technical lemmas that need to be used are given in the next section. In Section~\ref{sec3}, we propose an algorithm and analyze its convergence. Some computational tests and applications to verify our theoretical results are presented in Section~\ref{sec4}. Finally, the paper ends with a brief summary.
\section{Preliminaries}\label{sec2}
Let $ C $ be a  closed and convex nonempty subset of a real Hilbert space $ H $. The weak convergence and strong convergence of $\left\{x_{n}\right\}_{n=1}^{\infty}$ to $x$ are represented by $x_{n} \rightharpoonup x$ and $x_{n} \rightarrow x$, respectively. For each $x, y \in H$ and $\delta \in \mathbb{R}$, we have the following facts:
\begin{enumerate}[label=(\arabic*)]
\item $\|x+y\|^{2} \leq\|x\|^{2}+2\langle y, x+y\rangle$;
\item $\|\delta x+(1-\delta) y\|^{2}=\delta\|x\|^{2}+(1-\delta)\|y\|^{2}-\delta(1-\delta)\|x-y\|^{2}$.
\end{enumerate}

It is known that $P_{C} x$ has the following basic properties:
\begin{itemize}[leftmargin=1em]
\item $ \langle x-P_{C} x, y-P_{C} x\rangle \leq 0, \, \forall y \in C $;
\item $\left\|P_{C} x-P_{C} y\right\|^{2} \leq\langle P_{C} x-P_{C} y, x-y\rangle, \,\forall y \in H$;
\item $\left\|x-P_{C}(x)\right\|^{2} \leq\|x-y\|^{2}-\left\|y-P_{C}(x)\right\|^{2}, \, \forall y \in C$.
\end{itemize}

We give some explicit formulas to calculate projections on special feasible sets.
\begin{enumerate}[label=(\arabic*)]
\item The projection of $ x $ onto a half-space $H_{u, v}=\{x:\langle u, x\rangle \leq v\}$ is given by
\[P_{H_{u, v}}(x)=x-\max\{{[\langle u, x\rangle-v]}/{\|u\|^{2}},0\} u\,.
\]
\item The projection of $ x $ onto a box $\operatorname{Box}[a, b]=\{x: a \leq x \leq b\}$ is given by
\[
P_{\mathrm{Box}[a, b]}(x)_{i}=\min \left\{ b_{i},\max \left\{x_{i}, a_{i}\right\}\right\}\,.
\]
\item The projection of $ x $ onto a ball $B[p, q]=\{x:\|x-p\| \leq q\}$ is given by
\[
P_{B[p, q]}(x)=p+\frac{q}{\max \{\|x-p\|, q\}}(x-p)\,.
\]
\end{enumerate}

The following lemmas play an important role in our proof.
\begin{lemma}[\cite{CY1992}]\label{lem21}
Assume that $ C $ is a closed and convex subset of a real Hilbert space $H$. Let operator ${\mathcal{A}}: {C} \rightarrow {H}$ be continuous and pseudomonotone. Then, $x^{*}$ is a solution of \eqref{VIP} if and only if $ \langle \mathcal{A} x, x-x^{*}\rangle \geq 0,\, \forall x \in C $.
\end{lemma}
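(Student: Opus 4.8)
The plan is to establish the two implications of the equivalence separately, recognizing this as the classical Minty--Stampacchia lemma. The forward direction (that a solution of \eqref{VIP} satisfies the Minty-type inequality $\langle \mathcal{A} x, x-x^{*}\rangle \geq 0$) will rely on pseudomonotonicity alone, while the converse will rely on the continuity of $\mathcal{A}$ together with the convexity of $C$. Continuity is not needed for the first implication, and pseudomonotonicity is not needed for the second, so it is cleanest to keep the two arguments entirely separate.

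For the forward implication I would assume that $x^{*}$ solves \eqref{VIP}, so that $\langle \mathcal{A} x^{*}, x-x^{*}\rangle \geq 0$ for every $x \in C$. Fixing an arbitrary $x \in C$ and invoking the definition of pseudomonotonicity with $p=x^{*}$ and $q=x$, the hypothesis $\langle \mathcal{A} x^{*}, x-x^{*}\rangle \geq 0$ immediately produces $\langle \mathcal{A} x, x-x^{*}\rangle \geq 0$. Since $x$ was arbitrary, this is exactly the claimed inequality for all $x \in C$, and this direction requires nothing beyond the definition.

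For the converse I would assume $\langle \mathcal{A} x, x-x^{*}\rangle \geq 0$ for all $x \in C$ and aim to recover $\langle \mathcal{A} x^{*}, x-x^{*}\rangle \geq 0$. Fixing $x \in C$, I would use the convexity of $C$ to introduce the segment point $x_{t} := (1-t)x^{*}+t x \in C$ for $t \in (0,1]$. Substituting $x_{t}$ into the hypothesis and observing that $x_{t}-x^{*}=t(x-x^{*})$ gives $t\,\langle \mathcal{A} x_{t}, x-x^{*}\rangle \geq 0$; dividing by $t>0$ leaves $\langle \mathcal{A} x_{t}, x-x^{*}\rangle \geq 0$. Letting $t \to 0^{+}$, so that $x_{t} \to x^{*}$, the continuity of $\mathcal{A}$ yields $\mathcal{A} x_{t} \to \mathcal{A} x^{*}$, and passing to the limit in the inner product gives $\langle \mathcal{A} x^{*}, x-x^{*}\rangle \geq 0$, as required.

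The main obstacle, modest as it is, is precisely this limiting step in the converse: the entire purpose of assuming continuity of $\mathcal{A}$ is to justify $\mathcal{A} x_{t} \to \mathcal{A} x^{*}$, and one must confirm that $\langle \mathcal{A} x_{t}, x-x^{*}\rangle$ converges to $\langle \mathcal{A} x^{*}, x-x^{*}\rangle$. This follows from the continuity of the inner product in its first argument with the second argument held fixed, so no uniformity or boundedness beyond ordinary continuity of $\mathcal{A}$ is needed.
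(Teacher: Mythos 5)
Your proof is correct: the paper itself gives no proof of this lemma, citing Cottle--Yao \cite{CY1992} instead, and your argument is precisely the classical Minty--Stampacchia reasoning found there --- pseudomonotonicity applied with $p=x^{*}$, $q=x$ for the forward direction, and the segment substitution $x_{t}=(1-t)x^{*}+tx$ with $t\to 0^{+}$ and continuity of $\mathcal{A}$ along the segment for the converse. Your observation that the converse needs only continuity (indeed, only hemicontinuity along segments) and convexity, with no monotonicity assumption at all, is also accurate and correctly identifies how the hypotheses split between the two implications.
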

\begin{lemma}[\cite{SY2012}]\label{lem22}
Let $\left\{p_{n}\right\}$ be a positive sequence, $\left\{q_{n}\right\}$ be a sequence of real numbers, and $\left\{\sigma_{n}\right\}$ be a sequence in $ (0,1) $ such that $\sum_{n=1}^{\infty} \sigma_{n}=\infty$. Assume that
\[
p_{n+1} \leq\left(1-\sigma_{n}\right) p_{n}+\sigma_{n} q_{n}, \quad \forall n \geq 1\,.
\]
If $\limsup _{k \rightarrow \infty} q_{n_{k}} \leq 0$ for every subsequence $\left\{p_{n_{k}}\right\}$ of $\left\{p_{n}\right\}$ satisfying $\lim \inf _{k \rightarrow \infty}$
$\left(p_{n_{k}+1}-p_{n_{k}}\right) \geq~0$, then $\lim _{n \rightarrow \infty} p_{n}=0$.
\end{lemma}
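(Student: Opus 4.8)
The plan is to prove this discrete convergence lemma by a dichotomy on the monotonicity behaviour of $\{p_n\}$, exploiting that $\{p_n\}$ is bounded below by $0$, that $\sum_n \sigma_n = \infty$, and using the subsequence hypothesis on $\{q_n\}$ as the bridge that converts a non-decay of $\{p_n\}$ into a sign condition on $\{q_n\}$. First I would rewrite the recursion in the equivalent form $\sigma_n(p_n - q_n) \le p_n - p_{n+1}$, which isolates the telescoping quantity $p_n - p_{n+1}$ and serves as the workhorse in both cases.

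In the case where $\{p_n\}$ is eventually non-increasing, the sequence, being bounded below by $0$, converges to some $\ell \ge 0$, and $p_{n+1} - p_n \to 0$; hence the whole tail is an admissible subsequence, its consecutive differences having $\liminf = 0 \ge 0$, so the hypothesis yields $\limsup_n q_n \le 0$. If $\ell > 0$, then eventually $p_n \ge \ell$ while $q_n \le \ell/2$, giving $\sigma_n \ell/2 \le \sigma_n(p_n - q_n) \le p_n - p_{n+1}$; summing this and pitting the divergence $\sum \sigma_n = \infty$ against the finite telescoping bound $\sum (p_n - p_{n+1}) \le p_N$ produces a contradiction, so $\ell = 0$ and $p_n \to 0$.

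In the complementary case the index set $J = \{k : p_{k+1} > p_k\}$ is infinite, and here I would invoke Maingé's index-selection device: for large $n$ set $\tau(n) = \max\{k \le n : k \in J\}$, which is well defined, non-decreasing, tends to $\infty$, and satisfies both $p_{\tau(n)} \le p_{\tau(n)+1}$ and the domination $p_n \le p_{\tau(n)+1}$. Along the genuine subsequence indexed by $J$ the consecutive differences are positive, so the hypothesis gives $\limsup_{k \in J} q_k \le 0$; substituting $p_{\tau(n)} \le p_{\tau(n)+1}$ into the recursion at index $\tau(n)$ collapses it to $\sigma_{\tau(n)} p_{\tau(n)+1} \le \sigma_{\tau(n)} q_{\tau(n)}$, i.e. $p_{\tau(n)+1} \le q_{\tau(n)}$, whence $p_{\tau(n)+1} \to 0$. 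The domination $p_n \le p_{\tau(n)+1}$ then propagates the decay to the entire sequence, so $p_n \to 0$.

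I expect the main obstacle to be the non-monotone case: one must carefully establish (or cite) the two inequalities $p_{\tau(n)} \le p_{\tau(n)+1}$ and $p_n \le p_{\tau(n)+1}$ underlying Maingé's construction, and verify that the subsequence fed into the hypothesis genuinely meets the $\liminf \ge 0$ requirement — a minor subtlety, since $\tau$ may repeat values, that is handled by passing to the strictly increasing enumeration of $J$. Everything else reduces to the rearranged recursion together with the telescoping-versus-divergence estimate, which are routine.
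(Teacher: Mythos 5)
Your proposal is correct, and it coincides with the standard proof of this lemma: the paper itself states the result without proof, citing \cite{SY2012}, and the argument given there is exactly your dichotomy --- in the eventually non-increasing case, applying the hypothesis to the full tail and pitting the telescoped bound $\sum_n \sigma_n (p_n - q_n) < \infty$ against $\sum_n \sigma_n = \infty$ to force the limit to be $0$; in the non-monotone case, Maing\'e's index selection $\tau(n) = \max\{k \le n : p_{k+1} > p_k\}$ with the two inequalities $p_{\tau(n)} \le p_{\tau(n)+1}$ and $p_n \le p_{\tau(n)+1}$, collapsing the recursion at $\tau(n)$ to $p_{\tau(n)+1} \le q_{\tau(n)}$. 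The subtleties you flag (passing to the strictly increasing enumeration of $J$ so the hypothesis applies to a genuine subsequence, and verifying the $\liminf \ge 0$ condition in each case) are handled correctly, so nothing is missing.
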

\section{Main results}\label{sec3}
In this section, we present a self adaptive inertial viscosity-type Tseng's extragradient algorithm, which is based on the inertial method, the viscosity method and the Tseng's extragradient method. The major benefit of this algorithm is that the step size is automatically updated at each iteration without performing any line search procedure. Moreover, our iterative scheme only needs to calculate the projection once in each iteration. Before starting to state our main result, we assume that our algorithm satisfies the following five assumptions.
\begin{enumerate}[label=(C\arabic*)]
\item The feasible set $ C $ is  closed, convex and nonempty. \label{con1}
\item The solution set of the \eqref{VIP} is nonempty, that is, $\mathrm{VI}(C,\mathcal{A}) \neq \emptyset$.\label{con2}
\item The mapping $\mathcal{A}: H \rightarrow H$ is pseudomonotone and $ L $-Lipschitz continuous on $H$, and sequentially weakly continuous on $C$.  \label{con3}
\item The mapping $f: H \rightarrow H$ is $\rho$-contractive with $\rho \in[0,1)$.\label{con4}
\item The positive sequence $ \{\epsilon_{n}\} $ satisfies $\lim_{n \rightarrow \infty} \frac{\epsilon_{n}}{\varphi_{n}}=0$, where $ \{\varphi_{n}\}\subset (0,1) $ such that
$\lim _{n \rightarrow \infty} \varphi_{n}=0$ and $\sum_{n=1}^{\infty} \varphi_{n}=\infty$.  \label{con5}
\end{enumerate}

Now, we can state the details of the iterative method. Our algorithm is described as follows.
\begin{algorithm}[h]
\caption{Self adaptive inertial viscosity-type Tseng's extragradient algorithm}
\label{alg1}
\begin{algorithmic}
\STATE {\textbf{Initialization:} Given $ \delta>0 $, $\gamma_{1}>0$, $\phi \in(0,1)$. Let $x_{0},x_{1} \in H$  be two initial points.}
\STATE \textbf{Iterative Steps}: Calculate the next iteration point $ x_{n+1} $ as follows:
\begin{equation*}
\left\{\begin{aligned}
&s_{n}=x_{n}+\delta_{n}\left(x_{n}-x_{n-1}\right) \,,\\
&y_{n}=P_{C}\left(s_{n}-\gamma_{n} \mathcal{A} s_{n}\right) \,,\\
&z_{n}=y_{n}-\gamma_{n}\left(\mathcal{A} y_{n}-\mathcal{A} s_{n}\right)\,,\\
&x_{n+1}=\varphi_{n} f\left(z_{n}\right) + \left(1-\varphi_{n}\right) z_{n}\,.
\end{aligned}\right.
\end{equation*}
where $ \{\delta_{n}\} $ and $ \{\gamma_{n}\} $ are updated by \eqref{alpha} and \eqref{lambda}, respectively.
\begin{equation}\label{alpha}
\delta_{n}=\left\{\begin{array}{ll}
\min \bigg\{\dfrac{\epsilon_{n}}{\|x_{n}-x_{n-1}\|}, \delta\bigg\}, & \text { if } x_{n} \neq x_{n-1}; \\
\delta, & \text { otherwise}.
\end{array}\right.
\end{equation}
\begin{equation}\label{lambda}
\gamma_{n+1}=\left\{\begin{array}{ll}
\min \left\{\dfrac{\phi\|s_{n}-y_{n}\|}{\|\mathcal{A} s_{n}-\mathcal{A} y_{n}\|}, \gamma_{n}\right\}, & \text { if } \mathcal{A} s_{n}-\mathcal{A} y_{n} \neq 0; \\
\gamma_{n}, & \text { otherwise}.
\end{array}\right.
\end{equation}
\end{algorithmic}
\end{algorithm}

\begin{remark}\label{rem31}
It follows from \eqref{alpha} and Assumption~\ref{con5} that
\[
\lim _{n \rightarrow \infty} \frac{\delta_{n}}{\varphi_{n}}\left\|x_{n}-x_{n-1}\right\|=0\,.
\]
Indeed, we obtain $\delta_{n}\left\|x_{n}-x_{n-1}\right\| \leq \epsilon_{n}, \forall n$, which together with $\lim _{n \rightarrow \infty} \frac{\epsilon_{n}}{\varphi_{n}}=0$ yields
\[
\lim _{n \rightarrow \infty} \frac{\delta_{n}}{\varphi_{n}}\left\|x_{n}-x_{n-1}\right\| \leq \lim _{n \rightarrow \infty} \frac{\epsilon_{n}}{\varphi_{n}}=0\,.
\]
\end{remark}
\begin{lemma}\label{lem31}
The sequence $\left\{\gamma_{n}\right\}$ formulated by \eqref{lambda} is  nonincreasing and
\[
\lim _{n \rightarrow \infty} \gamma_{n}=\gamma \geq \min \Big\{\gamma_{1}, \frac{\phi}{L}\Big\}\,.
\]
\end{lemma}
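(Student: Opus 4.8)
The plan is to establish three facts in sequence: that $\{\gamma_n\}$ is nonincreasing, that it is bounded below by the strictly positive constant $\min\{\gamma_1,\phi/L\}$, and then to invoke the monotone convergence theorem to conclude that $\{\gamma_n\}$ converges to a limit $\gamma$ satisfying the stated inequality.

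First I would dispatch monotonicity, which is immediate from the definition \eqref{lambda}. In the branch where $\mathcal{A}s_n-\mathcal{A}y_n\neq 0$, the value $\gamma_{n+1}$ is a minimum of two quantities one of which is $\gamma_n$ itself, so $\gamma_{n+1}\leq\gamma_n$; in the complementary branch one has $\gamma_{n+1}=\gamma_n$. Hence $\gamma_{n+1}\leq\gamma_n$ for every $n$, and $\{\gamma_n\}$ is nonincreasing.

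The one step carrying any content is the uniform lower bound, which rests on the $L$-Lipschitz continuity of $\mathcal{A}$ from Assumption~\ref{con3}. Whenever $\mathcal{A}s_n-\mathcal{A}y_n\neq 0$, Lipschitz continuity gives $\|\mathcal{A}s_n-\mathcal{A}y_n\|\leq L\|s_n-y_n\|$, so the quotient appearing in \eqref{lambda} satisfies $\phi\|s_n-y_n\|/\|\mathcal{A}s_n-\mathcal{A}y_n\|\geq\phi/L$. Consequently $\gamma_{n+1}\geq\min\{\phi/L,\gamma_n\}$ in this branch, while trivially $\gamma_{n+1}=\gamma_n\geq\min\{\phi/L,\gamma_n\}$ in the other. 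A short induction on $n$, with base case $\gamma_1\geq\min\{\gamma_1,\phi/L\}$, then yields $\gamma_n\geq\min\{\gamma_1,\phi/L\}$ for all $n$: indeed, assuming $\gamma_n\geq\min\{\gamma_1,\phi/L\}$, in either branch one gets $\gamma_{n+1}\geq\min\{\phi/L,\gamma_n\}\geq\min\{\gamma_1,\phi/L\}$.

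Finally, since $\{\gamma_n\}$ is nonincreasing and bounded below by $\min\{\gamma_1,\phi/L\}$, the monotone convergence theorem guarantees that $\gamma:=\lim_{n\to\infty}\gamma_n$ exists and satisfies $\gamma\geq\min\{\gamma_1,\phi/L\}$, as asserted. I do not expect a genuine obstacle: the only point requiring care is ensuring the lower bound is strictly positive, which holds because $\gamma_1>0$, $\phi\in(0,1)$, and $L>0$ force $\min\{\gamma_1,\phi/L\}>0$, so the step sizes cannot degenerate to zero; every other step is routine.
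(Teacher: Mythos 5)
Your proposal is correct and follows essentially the same route as the paper: monotonicity read off from the definition of \eqref{lambda}, the lower bound $\phi/L$ obtained from the $L$-Lipschitz continuity of $\mathcal{A}$, and convergence via the monotone convergence theorem. The only difference is that you spell out by induction the step the paper states in one line, namely that the branchwise bound $\gamma_{n+1}\geq\min\{\phi/L,\gamma_{n}\}$ propagates to $\gamma_{n}\geq\min\{\gamma_{1},\phi/L\}$ for all $n$, which is a harmless (indeed welcome) elaboration rather than a different argument.
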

\begin{proof}
On account of \eqref{lambda}, we have $\gamma_{n+1} \leq \gamma_{n}, \forall n \in \mathbb{N} $. Hence, $\left\{\gamma_{n}\right\}$ is nonincreasing. Moreover, we get that $\left\|\mathcal{A} s_{n}-\mathcal{A} y_{n}\right\| \leq L\left\|s_{n}-y_{n}\right\|$ by means of $\mathcal{A}$ is $L$-Lipschitz continuous. Thus,
\[
\phi \frac{\left\|s_{n}-y_{n}\right\|}{\left\|\mathcal{A} s_{n}-\mathcal{A} y_{n}\right\|} \geq \frac{\phi}{L}, \,\,\text {  if  }\,\, \mathcal{A} s_{n} \neq \mathcal{A} y_{n}\,,
\]
which together with \eqref{lambda} implies that $ \gamma_{n} \geq \min \{\gamma_{1}, \frac{\phi}{L}\} $. Therefore,  $\lim _{n \rightarrow \infty} \gamma_{n}=\gamma \geq \min \big\{\gamma_{1}, \frac{\phi}{L}\big\}$ since sequence $ \{\gamma_{n}\} $  is lower bounded and nonincreasing. 
\end{proof}
The following lemmas have a significant part to play in the convergence proof of our algorithm.
\begin{lemma}\label{lem32}
Suppose that Assumptions \ref{con1}--\ref{con3} hold. Let $\{s_{n}\}$ and $ \{y_{n} \}$ be two sequences formulated by Algorithm~\ref{alg1}. If there exists a subsequence $\{s_{n_{k}}\}$ convergent weakly to $z \in H$ and $\lim _{k \rightarrow \infty}\|s_{n_{k}}-y_{n_{k}}\|=0$, then $z \in \mathrm{VI}(C, \mathcal{A})$.
\end{lemma}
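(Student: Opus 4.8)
The plan is to verify the two requirements for membership in $\mathrm{VI}(C,\mathcal{A})$ separately: that the weak limit $z$ lies in $C$, and that it satisfies the variational inequality. Since each $y_{n_k}=P_C(s_{n_k}-\gamma_{n_k}\mathcal{A}s_{n_k})$ belongs to $C$ and $\|s_{n_k}-y_{n_k}\|\to 0$, the hypothesis $s_{n_k}\rightharpoonup z$ forces $y_{n_k}\rightharpoonup z$ as well; because $C$ is closed and convex (Assumption~\ref{con1}) it is weakly closed, so $z\in C$.

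Next I would extract the key asymptotic inequality. Writing the projection characterization $\langle x-P_Cx,\,w-P_Cx\rangle\le 0$ for the point $s_{n_k}-\gamma_{n_k}\mathcal{A}s_{n_k}$ and any $w\in C$, and splitting $w-y_{n_k}=(w-s_{n_k})+(s_{n_k}-y_{n_k})$, one obtains
\[
\langle \mathcal{A}s_{n_k},\,w-s_{n_k}\rangle \;\ge\; \frac{1}{\gamma_{n_k}}\langle s_{n_k}-y_{n_k},\,w-y_{n_k}\rangle-\langle \mathcal{A}s_{n_k},\,s_{n_k}-y_{n_k}\rangle .
\]
The weakly convergent sequence $\{s_{n_k}\}$ is bounded, so by $L$-Lipschitz continuity (Assumption~\ref{con3}) $\{\mathcal{A}s_{n_k}\}$ is bounded, while $\gamma_{n_k}\to\gamma>0$ by Lemma~\ref{lem31} and $\|s_{n_k}-y_{n_k}\|\to 0$ by hypothesis. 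Hence both terms on the right vanish as $k\to\infty$, giving
\begin{equation*}
\liminf_{k\to\infty}\langle \mathcal{A}s_{n_k},\,w-s_{n_k}\rangle \ge 0,\qquad \forall\, w\in C. \tag{$\ast$}
\end{equation*}

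Finally I would convert $(\ast)$ into the desired inequality by exploiting pseudomonotonicity. If $\liminf_k\|\mathcal{A}s_{n_k}\|=0$, a subsequence of $\{\mathcal{A}s_{n_k}\}$ tends strongly to $0$; combined with sequential weak continuity ($\mathcal{A}s_{n_k}\rightharpoonup\mathcal{A}z$) this yields $\mathcal{A}z=0$, so $\langle\mathcal{A}z,\,w-z\rangle\ge 0$ trivially and $z\in\mathrm{VI}(C,\mathcal{A})$. Otherwise $\|\mathcal{A}s_{n_k}\|$ is bounded below, and I would fix $w\in C$, choose $\epsilon_k\downarrow 0$, select for each $k$ an index $N_k$ (via $(\ast)$) with $\langle \mathcal{A}s_{n_{N_k}},\,w-s_{n_{N_k}}\rangle+\epsilon_k\ge 0$, and set $v_{N_k}=\mathcal{A}s_{n_{N_k}}/\|\mathcal{A}s_{n_{N_k}}\|^2$ so that $\langle \mathcal{A}s_{n_{N_k}},\,w+\epsilon_k v_{N_k}-s_{n_{N_k}}\rangle\ge 0$. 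Applying pseudomonotonicity with $p=s_{n_{N_k}}$ and $q=w+\epsilon_k v_{N_k}$ gives $\langle \mathcal{A}(w+\epsilon_k v_{N_k}),\,w+\epsilon_k v_{N_k}-s_{n_{N_k}}\rangle\ge 0$. Since $\|\mathcal{A}s_{n_{N_k}}\|$ is bounded below, $\epsilon_k v_{N_k}\to 0$ strongly, so $\mathcal{A}(w+\epsilon_k v_{N_k})\to\mathcal{A}w$ strongly while $w-s_{n_{N_k}}\rightharpoonup w-z$; passing to the limit (the error term $\epsilon_k\langle\mathcal{A}(w+\epsilon_k v_{N_k}),v_{N_k}\rangle$ vanishes) yields $\langle\mathcal{A}w,\,w-z\rangle\ge 0$ for every $w\in C$, and Lemma~\ref{lem21} then identifies $z$ as a solution. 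I expect the main obstacle to be precisely this last limit passage: the naive route through $(\ast)$ stumbles on the term $\langle \mathcal{A}s_{n_k},\,z-s_{n_k}\rangle$, a product of two only-weakly-convergent factors that need not tend to zero. The device of perturbing $w$ by $\epsilon_k v_{N_k}$ and invoking pseudomonotonicity is what transfers the operator evaluation to the fixed point $w$, where strong convergence of $\mathcal{A}(w+\epsilon_k v_{N_k})$ is available and the weak-times-weak difficulty disappears; recognizing why the case $\liminf_k\|\mathcal{A}s_{n_k}\|=0$ must be split off is the accompanying subtlety.
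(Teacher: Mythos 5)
Your proposal follows essentially the same route as the paper's proof: the projection characterization of $y_{n_k}$ yields the liminf inequality, a vanishing perturbation by the normalized operator value combined with pseudomonotonicity transfers the inequality to the fixed test point $w$, and the Minty-type Lemma~\ref{lem21} concludes. The two real variations are these. First, you run the perturbation argument on $\{s_{n_k}\}$ and $\{\mathcal{A}s_{n_k}\}$ throughout, whereas the paper first passes (using $\|s_{n_k}-y_{n_k}\|\to 0$ and Lipschitz continuity) to $\liminf_{k}\langle\mathcal{A}y_{n_k},x-y_{n_k}\rangle\geq 0$ and perturbs around $y_{N_k}$. Second, you handle the degenerate case by the dichotomy on $\liminf_k\|\mathcal{A}s_{n_k}\|$ and uniqueness of weak limits, while the paper assumes $\mathcal{A}z\neq 0$ (otherwise $z$ solves the problem) and bounds $\|\mathcal{A}y_{n_k}\|$ below via weak lower semicontinuity of the norm; these are mirror images of each other, and your version has the mild advantage of not invoking weak lsc. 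Your closing remarks on why the perturbation device is needed (the weak-times-weak product $\langle\mathcal{A}s_{n_k},z-s_{n_k}\rangle$) correctly identify the crux.

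One step needs repair, and it arises exactly where your choice of $s_{n_k}$ over $y_{n_k}$ bites: Assumption~\ref{con3} grants sequential weak continuity of $\mathcal{A}$ only on $C$, and $s_n=x_n+\delta_n(x_n-x_{n-1})$ need not lie in $C$, so the inference ``$s_{n_k}\rightharpoonup z$ implies $\mathcal{A}s_{n_k}\rightharpoonup\mathcal{A}z$'' in your first case is not licensed as written. The fix is one line and already in your toolkit: since $y_{n_k}\in C$ and $y_{n_k}\rightharpoonup z$, Assumption~\ref{con3} gives $\mathcal{A}y_{n_k}\rightharpoonup\mathcal{A}z$, and $\|\mathcal{A}s_{n_k}-\mathcal{A}y_{n_k}\|\leq L\|s_{n_k}-y_{n_k}\|\to 0$ then transfers this to $\mathcal{A}s_{n_k}$; this is precisely why the paper anchors the weak-continuity step at the $y$-sequence. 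With that patch your argument is complete: the remaining ingredients --- pseudomonotonicity applied at points $w+\epsilon_k v_{N_k}$ possibly outside $C$ (permissible since \ref{con3} asserts pseudomonotonicity on all of $H$, as the paper itself exploits), strong convergence $\mathcal{A}(w+\epsilon_k v_{N_k})\to\mathcal{A}w$ by Lipschitz continuity, and $z\in C$ by weak closedness of the closed convex set $C$ --- are all sound.
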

\begin{proof}
From the property of projection and $ y_{n}=P_{C}\left(s_{n}-\gamma_{n} \mathcal{A} s_{n}\right) $, we have
\[
\langle s_{n_{k}}-\gamma_{n_{k}} \mathcal{A} s_{n_{k}}-y_{n_{k}}, x-y_{n_{k}}\rangle \leq 0, \quad \forall x \in C\,,
\]
which can be written as follows
\[
\frac{1}{\gamma_{n_{k}}}\langle s_{n_{k}}-y_{n_{k}}, x-y_{n_{k}}\rangle \leq\langle \mathcal{A} s_{n_{k}}, x-y_{n_{k}}\rangle, \quad \forall x \in C\,.
\]	
Through a direct calculation, we get
\begin{equation}\label{aw}
\frac{1}{\gamma_{n_{k}}}\langle s_{n_{k}}-y_{n_{k}}, x-y_{n_{k}}\rangle+\langle \mathcal{A} s_{n_{k}}, y_{n_{k}}-s_{n_{k}}\rangle \leq\langle \mathcal{A} s_{n_{k}}, x-s_{n_{k}}\rangle,\quad  \forall x \in C\,.
\end{equation}
We have that $\{s_{n_{k}}\}$ is bounded since $\{s_{n_{k}}\}$ is convergent weakly to $z \in H$. Then, from the Lipschitz continuity of $\mathcal{A}$ and $\|s_{n_{k}}-y_{n_{k}}\| \rightarrow 0$, we obtain that  $\{\mathcal{A} s_{n_{k}}\}$ and $\{y_{n_{k}}\}$ are also bounded. Since $\gamma_{n_{k}} \geq \min \{\gamma_{1}, \frac{\phi}{L}\}$, one concludes from \eqref{aw} that
\begin{equation}\label{po}
\liminf _{k \rightarrow \infty}\langle \mathcal{A} s_{n_{k}}, x-s_{n_{k}}\rangle \geq 0, \quad \forall x \in C\,.
\end{equation}
Moreover, one has
\begin{equation}\label{pi}
\begin{aligned}
\langle \mathcal{A} y_{n_{k}}, x-y_{n_{k}}\rangle=&\langle \mathcal{A} y_{n_{k}}-\mathcal{A} s_{n_{k}}, x-s_{n_{k}}\rangle +\langle \mathcal{A} s_{n_{k}}, x-s_{n_{k}}\rangle+\langle \mathcal{A} y_{n_{k}}, s_{n_{k}}-y_{n_{k}}\rangle\,.
\end{aligned}
\end{equation}
Since $\lim _{k \rightarrow \infty}\|s_{n_{k}}-y_{n_{k}}\|=0$ and $\mathcal{A}$ is Lipschitz continuous, we get $ \lim _{k \rightarrow \infty}\|\mathcal{A} s_{n_{k}}-\mathcal{A} y_{n_{k}}\|=0 $.
This together with \eqref{po} and \eqref{pi} yields that $ \liminf _{k \rightarrow \infty}\langle \mathcal{A} y_{n_{k}}, x-y_{n_{k}}\rangle \geq 0 $.

Next, we select a positive number decreasing sequence $\{\zeta_{k}\}$ such that $ \zeta_{k}\to 0 $ as $ k \to \infty $. For any $k$, we represent the smallest positive integer with $N_{k}$ such that
\begin{equation}\label{pp}
\langle \mathcal{A} y_{n_{j}}, x-y_{n_{j}}\rangle+\zeta_{k} \geq 0,\quad  \forall j \geq N_{k}\,.
\end{equation}
It can be easily seen that the sequence $\{N_{k}\}$ is increasing because   $\{\zeta_{k}\}$ is decreasing. Moreover, for any $k$, from $\{y_{N_{k}}\} \subset C$, we can assume $\mathcal{A} y_{N_{k}} \neq 0$ (otherwise, $y_{N_{k}}$ is a solution) and set  $ u_{N_{k}}={\mathcal{A} y_{N_{k}}}/{\|\mathcal{A} y_{N_{k}}\|^{2}} $. Then, we get $\langle \mathcal{A} y_{N_{k}}, u_{N_{k}}\rangle=1, \forall k$. Now, we can deduce from \eqref{pp} that $ \langle \mathcal{A} y_{N_{k}}, x+\zeta_{k} u_{N_{k}}-y_{N_{k}}\rangle \geq 0,\forall k $. According to the fact that  $\mathcal{A}$ is pseudomonotone on $H$, we can show that
\[
\langle \mathcal{A}\left(x+\zeta_{k} u_{N_{k}}\right), x+\zeta_{k} u_{N_{k}}-y_{N_{k}}\rangle \geq 0\,,
\]
which further yields that
\begin{equation}\label{pu}
\langle \mathcal{A} x, x-y_{N_{k}}\rangle \geq\langle \mathcal{A} x-\mathcal{A}\left(x+\zeta_{k} u_{N_{k}}\right), x+\zeta_{k} u_{N_{k}}-y_{N_{k}}\rangle-\zeta_{k}\langle \mathcal{A} x, u_{N_{k}}\rangle\,.
\end{equation}
Now, we prove that $\lim _{k \rightarrow \infty} \zeta_{k} u_{N_{k}}=0 $. We get that $y_{N_{k}} \rightharpoonup z$ since $s_{n_{k}} \rightharpoonup z$ and $\lim _{k \rightarrow \infty} \| s_{n_{k}}-$
$y_{n_{k}} \|=0$. From $\{y_{n}\} \subset C$, we have $z \in C $.  In view of $\mathcal{A}$ is sequentially weakly continuous on~$C$, one has that $\{\mathcal{A} y_{n_{k}}\}$ converges weakly to $\mathcal{A} z $. One assumes that $\mathcal{A} z \ne 0$ (otherwise, $z$ is a solution). According to the fact that norm mapping is sequentially weakly lower semicontinuous, we obtain $ 0<\|\mathcal{A} z\| \leq \liminf _{k \rightarrow \infty}\|\mathcal{A} y_{n_{k}}\| $. Using $\{y_{N_{k}}\} \subset\{y_{n_{k}}\}$ and $\zeta_{k} \rightarrow 0$ as $k \rightarrow \infty$, we have
\[
0 \leq \limsup _{k \rightarrow \infty}\|\zeta_{k} u_{N_{k}}\|=\limsup _{k \rightarrow \infty}\Big(\frac{\zeta_{k}}{\|\mathcal{A} y_{n_{k}}\|}\Big) \leq \frac{\lim \sup _{k \rightarrow \infty} \zeta_{k}}{\lim \inf _{k \rightarrow \infty}\|\mathcal{A} y_{n_{k}}\|}=0\,.
\]
That is, $\lim _{k \rightarrow \infty} \zeta_{k} u_{N_{k}}=0$. Thus, from the facts that  $\mathcal{A}$ is Lipschitz continuous, sequences $\{y_{N_{k}}\}$ and $\{u_{N_{k}}\}$ are bounded and $\lim _{k \rightarrow \infty} \zeta_{k} u_{N_{k}}=0 $, we can conclude from \eqref{pu} that $ \liminf _{k \rightarrow \infty}\langle \mathcal{A} x, x-y_{N_{k}}\rangle \geq 0 $.
Therefore,
\[
\langle \mathcal{A} x, x-z\rangle=\lim _{k \rightarrow \infty}\langle \mathcal{A} x, x-y_{N_{k}}\rangle=\liminf _{k \rightarrow \infty}\langle \mathcal{A} x, x-y_{N_{k}}\rangle \geq 0, \forall x\in C\,.
\]
Consequently, we observe that $z \in \mathrm{VI}({C}, {\mathcal{A}})$ by  Lemma~\ref{lem21}.  This completes the proof. 
\end{proof}
\begin{remark}
If $\mathcal{A}$ is monotone,  then $\mathcal{A}$ does not need to satisfy sequential weak continuity, see \cite{DSC}.
\end{remark}

\begin{lemma}\label{lem33}
Suppose that Assumptions \ref{con1}--\ref{con3} hold. Let sequences $\{z_{n}\}$ and $ \{y_{n}\} $ be formulated by Algorithm~\ref{alg1}. Then, we have
\[
\|z_{n}-u\|^{2} \leq\|s_{n}-u\|^{2}-\Big(1-\phi^{2} \frac{\gamma_{n}^{2}}{\gamma_{n+1}^{2}}\Big)\|s_{n}-y_{n}\|^{2},\quad  \forall u \in \mathrm{VI}(C, \mathcal{A})\,,
\]
and
\[
\|z_{n}-y_{n}\| \leq \phi \frac{\gamma_{n}}{\gamma_{n+1}}\|s_{n}-y_{n}\|\,.
\]
\end{lemma}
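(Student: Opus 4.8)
The plan is to dispatch the second, simpler estimate first, since it isolates the role of the adaptive step size \eqref{lambda}, and then to reuse it inside the first estimate. For the bound on $\|z_n-y_n\|$, I would read off from the definition $z_n=y_n-\gamma_n(\mathcal{A} y_n-\mathcal{A} s_n)$ that $z_n-y_n=\gamma_n(\mathcal{A} s_n-\mathcal{A} y_n)$, hence $\|z_n-y_n\|=\gamma_n\|\mathcal{A} s_n-\mathcal{A} y_n\|$. When $\mathcal{A} s_n\neq\mathcal{A} y_n$, the definition of $\gamma_{n+1}$ in \eqref{lambda} gives $\gamma_{n+1}\le\phi\|s_n-y_n\|/\|\mathcal{A} s_n-\mathcal{A} y_n\|$, i.e. $\|\mathcal{A} s_n-\mathcal{A} y_n\|\le(\phi/\gamma_{n+1})\|s_n-y_n\|$; the case $\mathcal{A} s_n=\mathcal{A} y_n$ makes both sides vanish. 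Multiplying by $\gamma_n$ yields $\|z_n-y_n\|\le\phi(\gamma_n/\gamma_{n+1})\|s_n-y_n\|$, which is the claim.

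For the first inequality I would expand, using the definition of $z_n$,
\[
\|z_n-u\|^2=\|y_n-u\|^2-2\gamma_n\langle \mathcal{A} y_n-\mathcal{A} s_n,\,y_n-u\rangle+\gamma_n^2\|\mathcal{A} y_n-\mathcal{A} s_n\|^2\,.
\]
The term $\|y_n-u\|^2$ I would control through the variational characterization of the projection $y_n=P_C(s_n-\gamma_n\mathcal{A} s_n)$: testing $\langle s_n-\gamma_n\mathcal{A} s_n-y_n,\,x-y_n\rangle\le0$ at the admissible point $x=u\in C$ and combining with the identity $\|s_n-u\|^2=\|s_n-y_n\|^2+2\langle s_n-y_n,\,y_n-u\rangle+\|y_n-u\|^2$ produces
\[
\|y_n-u\|^2\le\|s_n-u\|^2-\|s_n-y_n\|^2+2\gamma_n\langle \mathcal{A} s_n,\,u-y_n\rangle\,.
\]

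The decisive step, and the one I expect to demand the most care, is the substitution of this bound into the expansion followed by the cancellation of the $\mathcal{A} s_n$ contributions: the two inner-product terms collapse via $\langle \mathcal{A} s_n,\,u-y_n\rangle-\langle \mathcal{A} y_n-\mathcal{A} s_n,\,y_n-u\rangle=\langle \mathcal{A} y_n,\,u-y_n\rangle$, leaving
\[
\|z_n-u\|^2\le\|s_n-u\|^2-\|s_n-y_n\|^2+2\gamma_n\langle \mathcal{A} y_n,\,u-y_n\rangle+\gamma_n^2\|\mathcal{A} y_n-\mathcal{A} s_n\|^2\,.
\]
Here pseudomonotonicity enters. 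Since $u\in\mathrm{VI}(C,\mathcal{A})$ and $y_n\in C$, the definition of a solution gives $\langle \mathcal{A} u,\,y_n-u\rangle\ge0$, and pseudomonotonicity of $\mathcal{A}$ then forces $\langle \mathcal{A} y_n,\,y_n-u\rangle\ge0$, i.e. $\langle \mathcal{A} y_n,\,u-y_n\rangle\le0$, so that cross term is nonpositive and may be discarded.

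Finally I would invoke the step-size estimate already established, namely $\|\mathcal{A} y_n-\mathcal{A} s_n\|^2\le(\phi^2/\gamma_{n+1}^2)\|s_n-y_n\|^2$, to replace the last term by $\phi^2(\gamma_n^2/\gamma_{n+1}^2)\|s_n-y_n\|^2$, whereupon the two $\|s_n-y_n\|^2$ terms combine into exactly $\|z_n-u\|^2\le\|s_n-u\|^2-\bigl(1-\phi^2\gamma_n^2/\gamma_{n+1}^2\bigr)\|s_n-y_n\|^2$. The genuine subtleties are confined to correct sign bookkeeping in the projection inequality and the cancellation of the $\mathcal{A} s_n$ terms; everything else is a direct application of \eqref{lambda} and the pseudomonotone inequality.
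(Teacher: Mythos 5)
Your proposal is correct and follows essentially the same route as the paper's own proof: the same case analysis of \eqref{lambda} yielding $\|\mathcal{A}s_{n}-\mathcal{A}y_{n}\|\le(\phi/\gamma_{n+1})\|s_{n}-y_{n}\|$, the same expansion of $\|z_{n}-u\|^{2}$, the same projection inequality tested at $u$, the same cancellation of the $\mathcal{A}s_{n}$ terms leaving $\langle \mathcal{A}y_{n}, u-y_{n}\rangle$, and the same use of pseudomonotonicity (via $\langle \mathcal{A}u, y_{n}-u\rangle\ge 0$) to discard that term. The only differences are presentational: you establish the $\|z_{n}-y_{n}\|$ bound first and isolate the estimate for $\|y_{n}-u\|^{2}$ before substituting, whereas the paper carries everything in one chain of equalities.
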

\begin{proof}
First, using the definition of $\left\{\gamma_{n}\right\}$, one obtains
\begin{equation}\label{q}
\left\|\mathcal{A} s_{n}-\mathcal{A} y_{n}\right\| \leq \frac{\phi}{\gamma_{n+1}}\left\|s_{n}-y_{n}\right\|, \quad \forall n\,.
\end{equation}
Indeed, if $\mathcal{A} s_{n}=\mathcal{A} y_{n}$ then \eqref{q} clearly holds. Otherwise, it follows from \eqref{lambda} that
\[
\gamma_{n+1}=\min \left\{\frac{\phi\left\|s_{n}-y_{n}\right\|}{\left\|\mathcal{A} s_{n}-\mathcal{A} y_{n}\right\|}, \gamma_{n}\right\} \leq \frac{\phi\left\|s_{n}-y_{n}\right\|}{\left\|\mathcal{A} s_{n}-\mathcal{A} y_{n}\right\|}\,.
\]
Consequently, we have
\[
\left\|\mathcal{A} s_{n}-\mathcal{A} y_{n}\right\| \leq \frac{\phi}{\gamma_{n+1}}\left\|s_{n}-y_{n}\right\|\,.
\]
Therefore,   inequality \eqref{q} holds when $\mathcal{A} s_{n}=\mathcal{A} y_{n}$ and $\mathcal{A} s_{n} \neq \mathcal{A} y_{n}$. From the definition of $ z_{n} $, one sees that
\begin{equation}\label{a}
\begin{aligned}
\|z_{n}-u\|^{2}=&\|y_{n}-\gamma_{n}\left(\mathcal{A} y_{n}-\mathcal{A} s_{n}\right)-u\|^{2} \\
=&\|y_{n}-u\|^{2}+\gamma_{n}^{2}\|\mathcal{A} y_{n}-\mathcal{A} s_{n}\|^{2}-2 \gamma_{n}\langle y_{n}-u, \mathcal{A} y_{n}-\mathcal{A} s_{n}\rangle \\
=&\|s_{n}-u\|^{2}+\|y_{n}-s_{n}\|^{2}+2\langle y_{n}-s_{n}, s_{n}-u\rangle \\
&+\gamma_{n}^{2}\|\mathcal{A} y_{n}-\mathcal{A} s_{n}\|^{2}-2 \gamma_{n}\langle y_{n}-u, \mathcal{A} y_{n}-\mathcal{A} s_{n}\rangle \\
=&\|s_{n}-u\|^{2}+\|y_{n}-s_{n}\|^{2}-2\langle y_{n}-s_{n}, y_{n}-s_{n}\rangle+2\langle y_{n}-s_{n}, y_{n}-u\rangle \\
&+\gamma_{n}^{2}\|\mathcal{A} y_{n}-\mathcal{A} s_{n}\|^{2}-2 \gamma_{n}\langle y_{n}-u, \mathcal{A} y_{n}-\mathcal{A} s_{n}\rangle \\
=&\|s_{n}-u\|^{2}-\|y_{n}-s_{n}\|^{2}+2\langle y_{n}-s_{n}, y_{n}-u\rangle \\
&+\gamma_{n}^{2}\|\mathcal{A} y_{n}-\mathcal{A} s_{n}\|^{2}-2 \gamma_{n}\langle y_{n}-u, \mathcal{A} y_{n}-\mathcal{A} s_{n}\rangle\,.
\end{aligned}
\end{equation}
Since $y_{n}=P_{C}\left(s_{n}-\gamma_{n} \mathcal{A} s_{n}\right)$, using the property of projection, we obtain
\[
\langle y_{n}-s_{n}+\gamma_{n} \mathcal{A} s_{n}, y_{n}-u\rangle \leq 0\,,
\]
or equivalently
\begin{equation}\label{z}
\langle y_{n}-s_{n}, y_{n}-u\rangle \leq-\gamma_{n}\langle \mathcal{A} s_{n}, y_{n}-u\rangle\,.
\end{equation}
From \eqref{q}, \eqref{a} and \eqref{z}, we have
\begin{equation}\label{w}
\begin{aligned}
\|z_{n}-u\|^{2}  \leq&\|s_{n}-u\|^{2}-\|y_{n}-s_{n}\|^{2}-2 \gamma_{n}\langle \mathcal{A} s_{n}, y_{n}-u\rangle+\phi^{2} \frac{\gamma_{n}^{2}}{\gamma_{n+1}^{2}}\|s_{n}-y_{n}\|^{2} \\
&-2 \gamma_{n}\langle y_{n}-u, \mathcal{A} y_{n}-\mathcal{A} s_{n}\rangle \\
\leq&\|s_{n}-u\|^{2}-\Big(1-\phi^{2} \frac{\gamma_{n}^{2}}{\gamma_{n+1}^{2}}\Big)\|s_{n}-y_{n}\|^{2}-2 \gamma_{n}\langle y_{n}-u, \mathcal{A} y_{n}\rangle\,.
\end{aligned}
\end{equation}
From $u \in \mathrm{VI}(C, \mathcal{A})$, one has $\langle \mathcal{A} u, y_{n}-u\rangle \geq 0$. Using the pseudomonotonicity of $\mathcal{A}$, we get
\begin{equation}\label{s}
\langle \mathcal{A} y_{n}, y_{n}-u\rangle \geq 0\,.
\end{equation}
Combining \eqref{w} and \eqref{s}, we can show that
\[
\|z_{n}-u\|^{2} \leq\|s_{n}-u\|^{2}-\Big(1-\phi^{2} \frac{\gamma_{n}^{2}}{\gamma_{n+1}^{2}}\Big)\|s_{n}-y_{n}\|^{2}\,.
\]
According to the definition of $ z_{n} $ and \eqref{q}, we obtain
\[
\|z_{n}-y_{n}\| \leq \phi \frac{\gamma_{n}}{\gamma_{n+1}}\|s_{n}-y_{n}\|\,.
\]
This completes the proof of the Lemma~\ref{lem33}. 
\end{proof}

\begin{theorem}\label{thm41}
Suppose that Assumptions \ref{con1}--\ref{con5} hold. Then the iterative sequence $\{x_{n}\}$ formulated by Algorithm~\ref{alg1} converges to  $u \in  \mathrm{VI}(C, \mathcal{A})$  in norm, where $u = P_{\mathrm{VI}(C,\mathcal{A})} \circ f(u)$.
\end{theorem}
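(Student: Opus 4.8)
The plan is to apply Lemma~\ref{lem22} with $p_n=\|x_n-u\|^2$. First I would record that $\mathrm{VI}(C,\mathcal{A})$ is closed and convex (by Lemma~\ref{lem21} it equals $\{x\in C:\langle\mathcal{A}y,y-x\rangle\ge0\ \forall y\in C\}$, an intersection of half-spaces), so $P_{\mathrm{VI}(C,\mathcal{A})}\circ f$ is a $\rho$-contraction and the point $u=P_{\mathrm{VI}(C,\mathcal{A})}(f(u))$ exists and is unique by Banach's theorem; the projection inequality moreover gives $\langle f(u)-u,w-u\rangle\le0$ for every $w\in\mathrm{VI}(C,\mathcal{A})$, which I will need at the very end. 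Next I would prove that $\{x_n\}$ is bounded. Since $\gamma_n\to\gamma>0$ we have $\gamma_n/\gamma_{n+1}\to1$, so $1-\phi^2\gamma_n^2/\gamma_{n+1}^2\to1-\phi^2>0$ and Lemma~\ref{lem33} yields $\|z_n-u\|\le\|s_n-u\|$ for large $n$. Combining this with $\|s_n-u\|\le\|x_n-u\|+\delta_n\|x_n-x_{n-1}\|$ and the viscosity estimate $\|x_{n+1}-u\|\le(1-\varphi_n(1-\rho))\|z_n-u\|+\varphi_n\|f(u)-u\|$, and using Remark~\ref{rem31} to absorb $\delta_n\|x_n-x_{n-1}\|=\varphi_n\cdot\frac{\delta_n}{\varphi_n}\|x_n-x_{n-1}\|$ into a bounded multiple of $\varphi_n$, an induction gives boundedness of $\{x_n\}$, hence of $\{s_n\},\{y_n\},\{z_n\}$ and $\{f(z_n)\}$.

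Then I would derive the central recursion. Writing $x_{n+1}-u=\bigl[(1-\varphi_n)(z_n-u)+\varphi_n(f(z_n)-f(u))\bigr]+\varphi_n(f(u)-u)$ and applying the first fact of Section~\ref{sec2} together with the contractivity of $f$ gives
\[
\|x_{n+1}-u\|^2\le(1-\varphi_n(1-\rho))\|z_n-u\|^2+2\varphi_n\langle f(u)-u,x_{n+1}-u\rangle ,
\]
where I used $(1-\varphi_n(1-\rho))^2\le1-\varphi_n(1-\rho)$. Substituting the first inequality of Lemma~\ref{lem33} for $\|z_n-u\|^2$ and the inertial bound $\|s_n-u\|^2\le\|x_n-u\|^2+M\,\delta_n\|x_n-x_{n-1}\|$ (valid for some $M>0$ by boundedness) produces, with $p_n=\|x_n-u\|^2$, $\sigma_n=\varphi_n(1-\rho)$ and $a_n=1-\phi^2\gamma_n^2/\gamma_{n+1}^2$,
\[
p_{n+1}\le(1-\sigma_n)p_n-(1-\sigma_n)a_n\|s_n-y_n\|^2+(1-\sigma_n)M\,\delta_n\|x_n-x_{n-1}\|+2\varphi_n\langle f(u)-u,x_{n+1}-u\rangle .
\]
This is the form $p_{n+1}\le(1-\sigma_n)p_n+\sigma_n q_n$ demanded by Lemma~\ref{lem22}, with $q_n$ collecting the last three terms divided by $\sigma_n$.

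Finally I would verify the subsequence hypothesis of Lemma~\ref{lem22}. Let $\{p_{n_k}\}$ satisfy $\liminf_k(p_{n_k+1}-p_{n_k})\ge0$. Rearranging the recursion and taking $\limsup$, while using $\sigma_{n_k}\to0$, $\delta_{n_k}\|x_{n_k}-x_{n_k-1}\|\to0$ and $\varphi_{n_k}\to0$, forces $(1-\sigma_{n_k})a_{n_k}\|s_{n_k}-y_{n_k}\|^2\to0$, hence $\|s_{n_k}-y_{n_k}\|\to0$ since $a_{n_k}\to1-\phi^2>0$. The second inequality of Lemma~\ref{lem33} then gives $\|z_{n_k}-y_{n_k}\|\to0$, and from the iteration $\|z_{n_k}-s_{n_k}\|\to0$, $\|x_{n_k+1}-z_{n_k}\|=\varphi_{n_k}\|f(z_{n_k})-z_{n_k}\|\to0$ and $\|s_{n_k}-x_{n_k}\|\to0$, so that $\|x_{n_k+1}-s_{n_k}\|\to0$. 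In $q_{n_k}$ the $\|s-y\|^2$ term is nonpositive and the inertial term vanishes by Remark~\ref{rem31}, so it remains to bound $\frac{2}{1-\rho}\langle f(u)-u,x_{n_k+1}-u\rangle$; I would pass to a further subsequence realizing its $\limsup$ along which $s_{n_k}\rightharpoonup z$, invoke Lemma~\ref{lem32} to get $z\in\mathrm{VI}(C,\mathcal{A})$, and split $\langle f(u)-u,x_{n_k+1}-u\rangle=\langle f(u)-u,x_{n_k+1}-s_{n_k}\rangle+\langle f(u)-u,s_{n_k}-u\rangle$, so that $\|x_{n_k+1}-s_{n_k}\|\to0$ and $\langle f(u)-u,z-u\rangle\le0$ give $\limsup_k q_{n_k}\le0$. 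Lemma~\ref{lem22} then yields $p_n\to0$, i.e.\ $x_n\to u$ in norm. The main obstacle is precisely this last step: extracting $\|s_{n_k}-y_{n_k}\|\to0$ from the abstract $\liminf$ condition, and then transferring weak-limit information from $s_{n_k}$ to $x_{n_k+1}$ so that Lemma~\ref{lem32} and the projection characterization of $u$ can be combined.
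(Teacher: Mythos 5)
Your proposal is correct and follows essentially the same route as the paper's proof: boundedness via Lemma~\ref{lem33}, Remark~\ref{rem31} and a max-type induction, the same squared-norm recursion fed into Lemma~\ref{lem22}, extraction of $\|s_{n_k}-y_{n_k}\|\rightarrow 0$ from the subsequence hypothesis, and Lemma~\ref{lem32} plus the projection characterization of $u$ to get $\limsup_k\langle f(u)-u,x_{n_k+1}-u\rangle\le 0$. The only differences are cosmetic --- you merge the paper's Claims~2 and~3 into a single recursion and pass the weak limit through $s_{n_k}$ rather than $x_{n_k}$, and you additionally justify existence and uniqueness of $u$, which the paper leaves implicit.
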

\begin{proof}
\noindent \textbf{Claim 1.} The sequence $\{x_{n}\}$ is bounded. According to  Lemma~\ref{lem33}, we get that $\lim _{n \rightarrow \infty}\big(1-\phi^{2} \frac{\gamma_{n}^{2}}{\gamma_{n+1}^{2}}\big)=1-\phi^{2}>0$. Therefore, there is a constant $n_{0} \in \mathbb{N}$ that satisfies
$
1-\phi^{2} \frac{\gamma_{n}^{2}}{\gamma_{n+1}^{2}}>0, \forall n \geq n_{0}\,.
$
From Lemma~\ref{lem33}, one has
\begin{equation}\label{py}
\|z_{n}-u\| \leq\|s_{n}-u\|, \quad \forall n \geq n_{0}\,.
\end{equation}
By the definition of $s_{n}$, one sees that
\begin{equation}\label{pl}
\begin{aligned}
\|s_{n}-u\| &=\|x_{n}+\delta_{n}\left(x_{n}-x_{n-1}\right)-u\| \\
& \leq\|x_{n}-u\|+\delta_{n}\|x_{n}-x_{n-1}\| \\
&=\|x_{n}-u\|+\varphi_{n} \cdot \frac{\delta_{n}}{\varphi_{n}}\|x_{n}-x_{n-1}\|\,.
\end{aligned}
\end{equation}
From Remark~\ref{rem31}, one gets $\frac{\delta_{n}}{\varphi_{n}}\|x_{n}-x_{n-1}\| \rightarrow 0$. Thus, there is a constant $Q_{1}>0$ that satisfies
\begin{equation}\label{ppl}
\frac{\delta_{n}}{\varphi_{n}}\|x_{n}-x_{n-1}\| \leq Q_{1},\quad  \forall n \geq 1\,.
\end{equation}
Using \eqref{py}, \eqref{pl} and \eqref{ppl}, we obtain
\begin{equation}\label{pppl}
\|z_{n}-u\| \leq\|s_{n}-u\| \leq\|x_{n}-u\|+\varphi_{n} Q_{1},\quad  \forall n \geq n_{0}\,.
\end{equation}
Using the definition of $\{x_{n+1}\}$ and \eqref{pppl}, we have
\[
\begin{aligned}
\|x_{n+1}-u\| &=\|\varphi_{n} f\left(z_{n}\right)+\left(1-\varphi_{n}\right) z_{n}-u\| \\
& \leq \varphi_{n}\|f\left(z_{n}\right)-f(u)\|+\varphi_{n}\|f(u)-u\|+\left(1-\varphi_{n}\right)\|z_{n}-u\| \\
& \leq \varphi_{n} \rho\|z_{n}-u\|+\varphi_{n}\|f(u)-u\|+\left(1-\varphi_{n}\right)\|z_{n}-u\| \\
&=\left(1-(1-\rho) \varphi_{n}\right)\|z_{n}-u\|+\varphi_{n}\|f(u)-u\|\\
& \leq\left(1-(1-\rho) \varphi_{n}\right)\|x_{n}-u\|+\varphi_{n} Q_{1}+\varphi_{n}\|f(u)-u\|  \\
&=\left(1-(1-\rho) \varphi_{n}\right)\|x_{n}-u\|+(1-\rho) \varphi_{n} \frac{Q_{1}+\|f(u)-u\|}{1-\rho}  \\
& \leq \max \Big\{\|x_{n}-u\|, \frac{Q_{1}+\|f(u)-u\|}{1-\rho}\Big\}  \\
& \leq \cdots  \leq \max \Big\{\|x_{n_{0}}-u\|, \frac{Q_{1}+\|f(u)-u\|}{1-\rho}\Big\},\,\, \forall n \geq n_{0}\,.
\end{aligned}
\]
That is, $\{x_{n}\}$ is bounded. We have that $ \{s_{n}\} $,  $\{z_{n}\}$  and $\{f\left(z_{n}\right)\}$ are also bounded.

\noindent \textbf{Claim 2.}
\[
\Big(1-\phi^{2} \frac{\gamma_{n}^{2}}{\gamma_{n+1}^{2}}\Big)\|s_{n}-y_{n}\|^{2}\leq\|x_{n}-u\|^{2}-\|x_{n+1}-u\|^{2}+\varphi_{n} Q_{4}
\]
for some $Q_{4}>0$. Indeed, it follows from \eqref{pppl} that
\begin{equation}\label{lk}
\begin{aligned}
\|s_{n}-u\|^{2} & \leq(\|x_{n}-u\|+\varphi_{n} Q_{1})^{2} \\
&=\|x_{n}-u\|^{2}+\varphi_{n}(2 Q_{1}\|x_{n}-u\|+\varphi_{n} Q_{1}^{2}) \\
& \leq\|x_{n}-u\|^{2}+\varphi_{n} Q_{2}
\end{aligned}
\end{equation}
for some $Q_{2}>0$. Combining Lemma \ref{lem33} and \eqref{lk}, we see that
\begin{equation}\label{plm}
\begin{aligned}
\|x_{n+1}-u\|^{2} & \leq \varphi_{n}\|f\left(z_{n}\right)-u\|^{2}+\left(1-\varphi_{n}\right)\|z_{n}-u\|^{2} \\
& \leq \varphi_{n}(\|f\left(z_{n}\right)-f(u)\|+\|f(u)-u\|)^{2}+\left(1-\varphi_{n}\right)\|z_{n}-u\|^{2} \\
&\leq \varphi_{n}(\|z_{n}-u\|+\|f(u)-u\|)^{2}+\left(1-\varphi_{n}\right)\|z_{n}-u\|^{2} \\
&=\varphi_{n}\|z_{n}-u\|^{2}+\left(1-\varphi_{n}\right)\|z_{n}-u\|^{2}\\
&\quad+\varphi_{n}(\|f(u)-u\|^{2}+2\|z_{n}-u\| \cdot\|f(u)-u\|) \\
&\leq\|z_{n}-u\|^{2}+\varphi_{n} Q_{3}\\
&\leq \|s_{n}-u\|^{2}-\Big(1-\phi^{2} \frac{\gamma_{n}^{2}}{\gamma_{n+1}^{2}}\Big)\|s_{n}-y_{n}\|^{2}+\varphi_{n} Q_{3}\\
&\leq \|x_{n}-u\|^{2}-\Big(1-\phi^{2} \frac{\gamma_{n}^{2}}{\gamma_{n+1}^{2}}\Big)\|s_{n}-y_{n}\|^{2}+\varphi_{n} Q_{4}
\end{aligned}
\end{equation}
where $Q_{4}:=Q_{2}+Q_{3}$. Therefore, we obtain
\[
\Big(1-\phi^{2} \frac{\gamma_{n}^{2}}{\gamma_{n+1}^{2}}\Big)\|s_{n}-y_{n}\|^{2}\leq\|x_{n}-u\|^{2}-\|x_{n+1}-u\|^{2}+\varphi_{n} Q_{4}\,.
\]

\noindent \textbf{Claim 3.}
\[
\begin{aligned}
\|x_{n+1}-u\|^{2} \leq& \left(1-(1-\rho) \varphi_{n}\right)\|x_{n}-u\|^{2}+(1-\rho) \varphi_{n}\cdot\Big[\frac{3 Q}{1-\rho} \cdot \frac{\delta_{n}}{\varphi_{n}}\|x_{n}-x_{n-1}\|\Big. \\
&+\Big.\frac{2}{1-\rho}\langle f(u)-u, x_{n+1}-u\rangle\Big],\quad \forall n \geq n_{0}\,.
\end{aligned}
\]
for some $Q>0$. Using the definition of $ s_{n} $, we can show that
\begin{equation}\label{pm}
\begin{aligned}
\|s_{n}-u\|^{2}
&=\|x_{n}+\delta_{n}\left(x_{n}-x_{n-1}\right)-u\|^{2} \\
&\leq\|x_{n}-u\|^{2}+2 \delta_{n}\|x_{n}-u\|\|x_{n}-x_{n-1}\|+\delta_{n}^{2}\|x_{n}-x_{n-1}\|^{2}\\
&\leq\|x_{n}-u\|^{2}+3Q\delta_{n}\|x_{n}-x_{n-1}\|\,,
\end{aligned}
\end{equation}
where $Q:=\sup _{n \in \mathbb{N}}\{\|x_{n}-u\|, \delta\|x_{n}-x_{n-1}\|\}>0$.
Using \eqref{py} and \eqref{pm}, we get
\begin{equation}
\begin{aligned}
&\quad \|x_{n+1}-u\|^{2} =\|\varphi_{n} f\left(z_{n}\right)+\left(1-\varphi_{n}\right) z_{n}-u\|^{2} \\
&=\|\varphi_{n}(f\left(z_{n}\right)-f(u))+\left(1-\varphi_{n}\right)(z_{n}-u)+\varphi_{n}(f(u)-u)\|^{2} \\
& \leq\|\varphi_{n}(f\left(z_{n}\right)-f(u))+\left(1-\varphi_{n}\right)(z_{n}-u)\|^{2}+2 \varphi_{n}\langle f(u)-u, x_{n+1}-u\rangle \\
& \leq \varphi_{n}\|f\left(z_{n}\right)-f(u)\|^{2}+\left(1-\varphi_{n}\right)\|z_{n}-u\|^{2}+2 \varphi_{n}\langle f(u)-u, x_{n+1}-u\rangle \\
& \leq \varphi_{n} \rho^{2}\|z_{n}-u\|^{2}+\left(1-\varphi_{n}\right)\|z_{n}-u\|^{2}+2 \varphi_{n}\langle f(u)-u, x_{n+1}-u\rangle \\
&\leq\left(1-(1-\rho) \varphi_{n}\right)\|z_{n}-u\|^{2}+2 \varphi_{n}\langle f(u)-u, x_{n+1}-u\rangle \\
&\leq \left(1-(1-\rho) \varphi_{n}\right)\|x_{n}-u\|^{2}+(1-\rho) \varphi_{n}\cdot\Big[\frac{3 Q}{1-\rho} \cdot \frac{\delta_{n}}{\varphi_{n}}\|x_{n}-x_{n-1}\|\Big. \\
&\quad+ \Big.\frac{2}{1-\rho}\langle f(u)-u, x_{n+1}-u\rangle\Big],\quad \forall n \geq n_{0}\,.
\end{aligned}
\end{equation}

\noindent \textbf{Claim 4.} $\{\|x_{n}-u\|^{2}\}$ converges to zero. From Lemma~\ref{lem22} and Remark~\ref{rem31},  it remains to show that $\lim \sup _{k \rightarrow \infty}\langle f(u)-u, x_{n_{k}+1}-u\rangle \leq 0$ for any subsequence $\{\|x_{n_{k}}-u\|\}$ of
$\{\|x_{n}-u\|\}$ satisfies $ \liminf _{k \rightarrow \infty}\big(\|x_{n_{k}+1}-u\|-\|x_{n_{k}}-u\|\big) \geq 0 $.

For this purpose, we assume that $\{\|x_{n_{k}}-u\|\}$ is a subsequence of $\{\|x_{n}-u\|\}$ such that
\[
\liminf _{k \rightarrow \infty}\left(\|x_{n_{k}+1}-u\|-\|x_{n_{k}}-u\|\right) \geq 0\,.
\]
Then,
\[\begin{aligned}
&\quad\lim _{k \rightarrow \infty} \inf \big(\|x_{n_{k}+1}-u\|^{2}-\|x_{n_{k}}-u\|^{2}\big) \\
&=\liminf _{k \rightarrow \infty}\big[(\|x_{n_{k}+1}-u\|-\|x_{n_{k}}-u\|)(\|x_{n_{k}+1}-u\|+\|x_{n_{k}}-u\|)\big] \geq 0\,.
\end{aligned}
\]
It follows from Claim 2 and Assumption \ref{con5} that
\[
\begin{aligned}
&\quad \limsup _{k \rightarrow \infty}\big(1-\phi^{2} \frac{\gamma_{n_{k}}^{2}}{\gamma_{n_{k}+1}^{2}}\big)\|s_{n_{k}}-y_{n_{k}}\|^{2} \\
& \leq \limsup _{k \rightarrow \infty}\big[\|x_{n_{k}}-u\|^{2}-\|x_{n_{k}+1}-u\|^{2}\big]+\limsup _{k \rightarrow \infty} \varphi_{n_{k}} Q_{4} \\
&=-\liminf _{k \rightarrow \infty}\big[\|x_{n_{k}+1}-u\|^{2}-\|x_{n_{k}}-u\|^{2}\big] \\
& \leq 0\,,
\end{aligned}
\]
which yields that $ \lim _{k \rightarrow \infty}\|s_{n_{k}}-y_{n_{k}}\|=0$. From Lemma~\ref{lem33}, we obtain $\lim _{k \rightarrow \infty}\|z_{n_{k}}-y_{n_{k}}\|=0 $.
Hence,   $\lim _{k \rightarrow \infty}\|z_{n_{k}}-s_{n_{k}}\|=0$.

Moreover, using Remark \ref{rem31} and Assumption \ref{con5}, we have
\[
\|x_{n_{k}}-s_{n_{k}}\|=\delta_{n_{k}}\|x_{n_{k}}-x_{n_{k}-1}\|=\varphi_{n_{k}} \cdot \frac{\delta_{n_{k}}}{\varphi_{n_{k}}}\|x_{n_{k}}-x_{n_{k}-1}\| \rightarrow 0\,,
\]
and
\[
\|x_{n_{k}+1}-z_{n_{k}}\|=\varphi_{n_{k}}\|z_{n_{k}}-f\left(z_{n_{k}}\right)\| \rightarrow 0\,.
\]
Therefore, we conclude that
\begin{equation}\label{pj}
\|x_{n_{k}+1}-x_{n_{k}}\| \leq\|x_{n_{k}+1}-z_{n_{k}}\|+\|z_{n_{k}}-s_{n_{k}}\|+\|s_{n_{k}}-x_{n_{k}}\| \rightarrow 0\,.
\end{equation}
Since  $\{x_{n_{k}}\}$ is bounded, one asserts that there is a subsequence $\{x_{n_{k_{j}}}\}$ of $\{x_{n_{k}}\}$ that satisfies $ x_{n_{k_{j}}}\rightharpoonup q$. Furthermore,
\begin{equation}\label{pk}
\limsup _{k \rightarrow \infty}\langle f(u)-u, x_{n_{k}}-u\rangle=\lim _{j \rightarrow \infty}\langle f(u)-u, x_{n_{k_{j}}}-u\rangle=\langle f(u)-u, q-u\rangle\,.
\end{equation}
We get $s_{n_{k}} \rightharpoonup q$ since $ \|x_{n_{k}}-s_{n_{k}}\|\rightarrow 0 $. This together with $ \lim _{k \rightarrow \infty}\|s_{n_{k}}-y_{n_{k}}\|=0$ and Lemma~\ref{lem32} obtains $q \in \mathrm{VI}(C, \mathcal{A}) $. By the definition of $ u = P_{\mathrm{VI}(C,\mathcal{A})} \circ f(u) $ and \eqref{pk}, we infer that
\begin{equation}\label{pn}
\limsup _{k \rightarrow \infty}\langle f(u)-u, x_{n_{k}}-u\rangle=\langle f(u)-u, q-u\rangle \leq 0\,.
\end{equation}
Combining \eqref{pj} and \eqref{pn},  we see that
\begin{equation}\label{pb}
\begin{aligned}
\limsup _{k \rightarrow \infty}\langle f(u)-u, x_{n_{k}+1}-u\rangle & \leq \limsup _{k \rightarrow \infty}\langle f(u)-u, x_{n_{k}}-u\rangle \leq 0\,.
\end{aligned}
\end{equation}
Thus, from Remark~\ref{rem31}, \eqref{pb}, Claim 3 and Lemma~\ref{lem22}, we conclude that $ x_{n}\rightarrow u $.  The proof of the Theorem~\ref{thm41} is now complete. 
\end{proof}

If inertial parameter $ \delta_{n}=0 $ in Algorithm~\ref{alg1}, we have the following result.
\begin{corollary}
Assume that mapping $\mathcal{A}: H \rightarrow H$ is $ L $-Lipschitz continuous pseudomonotone on $H$ and sequentially weakly continuous on $C$. Let mapping $f: H \rightarrow H$ be $\rho$-contractive with $\rho \in[0,1)$. Given $\gamma_{0}>0$, $ \{\varphi_{n}\}\subset (0,1) $ satisfies
$\lim _{n \rightarrow \infty} \varphi_{n}=0$ and $\sum_{n=1}^{\infty} \varphi_{n}=\infty$. Let $x_{0}$  be the initial point and $ \{x_{n}\} $ be the sequence  generated by
\begin{equation}\label{coro1}
\left\{\begin{aligned}
&y_{n}=P_{C}\left(x_{n}-\gamma_{n} \mathcal{A} x_{n}\right) \,,\\
&z_{n}=y_{n}-\gamma_{n}\left(\mathcal{A} y_{n}-\mathcal{A} x_{n}\right)\,,\\
&x_{n+1}=\varphi_{n} f\left(z_{n}\right) + \left(1-\varphi_{n}\right) z_{n}\,,
\end{aligned}\right.
\end{equation}
where step size $ \{\gamma_{n}\} $ is updated through \eqref{lambda}. Then the iterative sequence $\{x_{n}\}$ formulated by Algorithm~\eqref{coro1} converges to  $u \in  \mathrm{VI}(C, \mathcal{A})$  in norm, where $u = P_{\mathrm{VI}(C,\mathcal{A})} \circ f(u)$.
\end{corollary}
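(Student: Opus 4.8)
The plan is to recognize the Corollary as the degenerate case of Theorem~\ref{thm41} in which the inertial parameter is switched off, so that essentially the entire convergence analysis already carried out for Algorithm~\ref{alg1} transfers without modification. First I would observe that setting $\delta_{n}=0$ forces the extrapolation step to collapse, namely $s_{n}=x_{n}+\delta_{n}(x_{n}-x_{n-1})=x_{n}$ for every $n$. Under this substitution the three-line recursion \eqref{coro1} becomes literally identical to the iterative steps of Algorithm~\ref{alg1}, with $x_{n}$ now playing the role of $s_{n}$ throughout; in particular the step-size rule \eqref{lambda} is unchanged, so Lemma~\ref{lem31} still guarantees $\lim_{n\to\infty}\gamma_{n}=\gamma\geq\min\{\gamma_{0},\phi/L\}>0$.

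Next I would check that the standing hypotheses of the Corollary line up with Assumptions~\ref{con1}--\ref{con5}. The conditions imposed on $\mathcal{A}$, on $f$, and on $\{\varphi_{n}\}$ reproduce \ref{con1}--\ref{con4} exactly, while Assumption~\ref{con5}, which only constrains the auxiliary sequence $\{\epsilon_{n}\}$ governing the magnitude of the inertial term, becomes vacuous once $\delta_{n}\equiv 0$. Consequently the sole place where the inertial sequence enters the proof of Theorem~\ref{thm41}, namely the estimate of Remark~\ref{rem31}, degenerates to the trivial identity $\frac{\delta_{n}}{\varphi_{n}}\|x_{n}-x_{n-1}\|=0$, which obviously tends to zero.

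With these two observations in hand I would simply rerun the proof of Theorem~\ref{thm41} after replacing every occurrence of $s_{n}$ by $x_{n}$. Lemma~\ref{lem32} and Lemma~\ref{lem33} require only Assumptions~\ref{con1}--\ref{con3} and are insensitive to the value of $\delta_{n}$, so the boundedness argument of Claim~1, the energy inequality of Claim~2, the viscosity recursion of Claim~3, and the closing application of Lemma~\ref{lem22} in Claim~4 all pass through unchanged. In particular the term $\|x_{n_{k}}-s_{n_{k}}\|=\delta_{n_{k}}\|x_{n_{k}}-x_{n_{k}-1}\|$ appearing in \eqref{pj} now vanishes identically rather than merely in the limit, which only shortens the estimate. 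The strong limit is therefore $u=P_{\mathrm{VI}(C,\mathcal{A})}\circ f(u)$, as asserted.

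I anticipate essentially no genuine obstacle, since the inertial machinery was isolated in Remark~\ref{rem31} and feeds into the main argument only through that single bound. The one point meriting care is the bookkeeping verification that no later step secretly relied on $\delta_{n}\neq 0$: for instance the constant $Q:=\sup_{n}\{\|x_{n}-u\|,\,\delta\|x_{n}-x_{n-1}\|\}$ used in \eqref{pm} now reduces to $\sup_{n}\|x_{n}-u\|$, and the inertial contribution to Claim~3 drops out entirely. This is routine checking rather than a real difficulty.
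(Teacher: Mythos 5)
Your proposal is correct and coincides with the paper's own treatment: the paper offers no separate proof, introducing the corollary with the remark that it is Algorithm~\ref{alg1} with inertial parameter $\delta_{n}=0$, so that $s_{n}=x_{n}$ and Theorem~\ref{thm41} applies verbatim. Your additional bookkeeping (Remark~\ref{rem31} becoming trivial, the constant $Q$ simplifying, the vanishing of $\|x_{n_{k}}-s_{n_{k}}\|$ in \eqref{pj}) is exactly the routine verification the paper implicitly leaves to the reader.
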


\begin{remark}
It should be pointed out that Algorithm~\eqref{coro1} improves and summarizes \cite[Algorithm 3]{THna} and \cite[Algorithm 1]{YLNA}. Moreover, our algorithm is to solve pseudomonotone \eqref{VIP}, while \cite{THna} and \cite{YLNA} are to solve monotone \eqref{VIP}. We know that the classes of pseudomonotone mappings cover the classes of monotone mappings. Therefore, our algorithm is more applicable.
\end{remark}
\section{Numerical examples}\label{sec4}
In this section, we give some computational tests and applications to show the numerical behavior of our algorithm, and also to compare it with some strong convergent algorithms (Algorithms~\eqref{MaTEGM} and \eqref{ViSEGM}).  It should be emphasized that all algorithms can work without the prior information of the Lipschitz constant of the mapping. We use the FOM Solver~\cite{FOM} to effectively calculate the projections onto $ C $ and $ T_{n} $. All the programs are implemented in MATLAB 2018a on a personal computer. The parameters are chosen as follows:
\begin{itemize}
\item $ \phi=0.8 $, $ \gamma_{1}=1 $, $ \delta=0.3 $, $ \epsilon_{n}=1/(n+1)^2 $, $ \varphi_{n}=1/(n+1) $, $ f(x)=0.9x $ for the proposed Algorithm~\ref{alg1} and the Algorithm~\eqref{ViSEGM};
\item  $ \alpha=\ell=0.5 $, $ \phi=0.4 $, $ \varphi_{n}=1/(n+1) $, $ \tau_{n}=0.5(1-\varphi_{n}) $ for the Algorithm~\eqref{MaTEGM}.
\end{itemize}

In our numerical examples, when the number of iterations is the same, we use the runtime in seconds to measure the computational performance of all algorithms. In the situation, if the solution $x^{*}$ of our problem is known, we take $E(x)=\left\|x-x^{*}\right\|$ to represent the behavior of all algorithms. Otherwise,  according to the feature of solutions to~\eqref{VIP}, we use the sequences $ D_{n}=\|x_{n}-x_{n-1}\| $ and $ E_{n}=\|s_{n}-P_{C}(s_{n}-\gamma_{n}s_{n})\| $ to study the performance of all algorithms.  Note that, if $\left\|E_{n}\right\|\rightarrow 0$, then $x_{n}$ can be regards as an approximate solution of~\eqref{VIP}.

\begin{example}\label{ex1}
Let ${\mathcal{A}}: R^{m} \rightarrow R^{m}\, (m=5,10,15,20)$ be  an operator given by
\[
{{\mathcal{A}}}(x)=\frac{1}{\|x\|^{2}+1} \operatorname{argmin}_{y \in R^{m}} \Big\{\frac{\|y\|^{4}}{4}+\frac{1}{2}\|x-y\|^{2}\Big\}\,.
\]
We emphasize that the operator ${\mathcal{A}}$ is not monotone. However, the operator $\mathcal{A}$ is Lipschitz continuous pseudomonotone (see~\cite{HCXK}). In this example, we choose the feasible set is a box constraint $ C=[-5,5]^{m} $. Take initial values $ x_{0} = x_{1} $ are randomly generated by \emph{rand(m,1)} in MATLAB. The maximum iteration $ 50 $ as a common stopping criterion. For the four different dimensions of the operator $ \mathcal{A} $, the numerical results are presented in Figs.~\ref{ex1_data5}--\ref{ex1_data20}.
\begin{figure}[H]
\centering
\subfigure{
\includegraphics[width=0.45\textwidth]{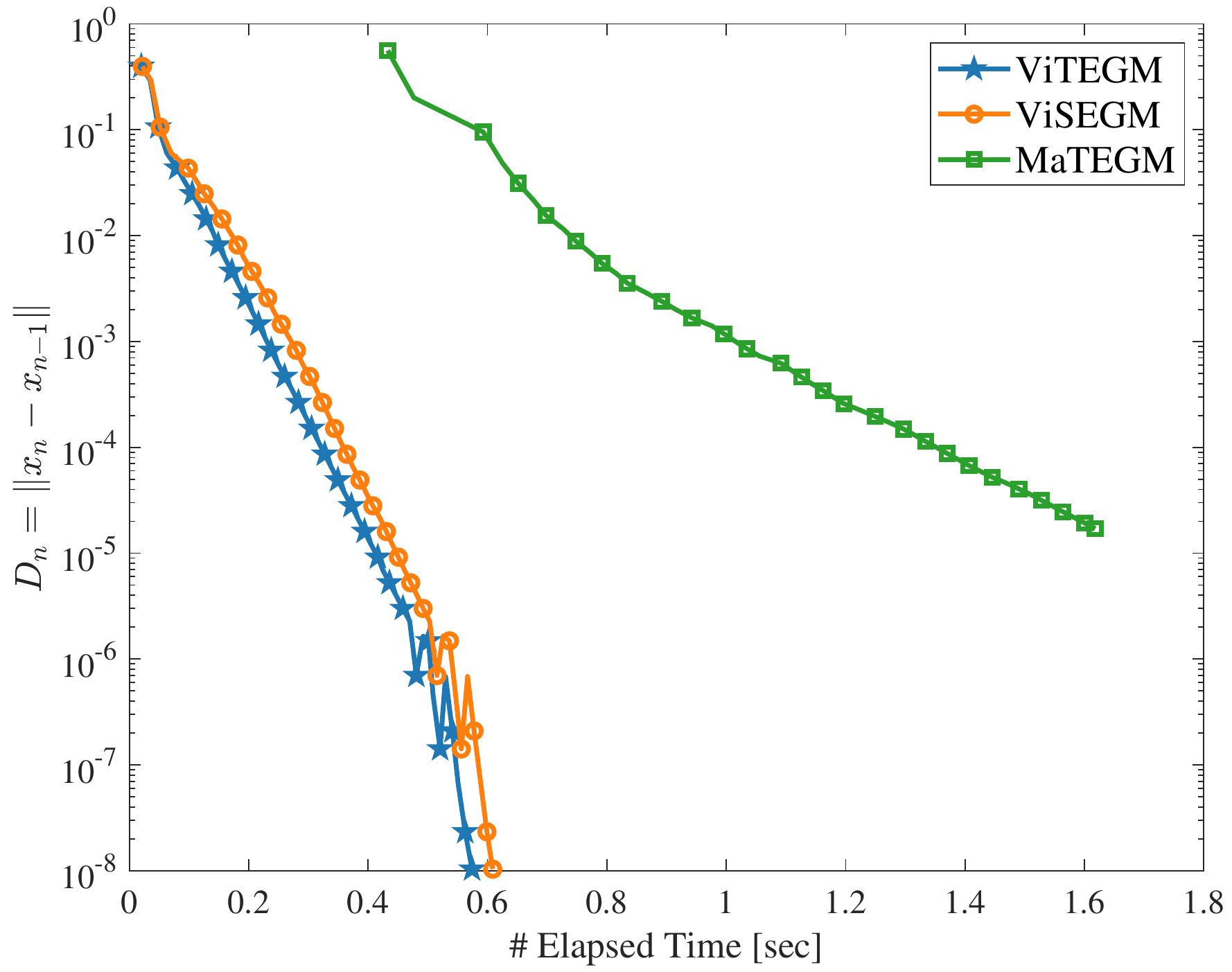}}
\subfigure{
\includegraphics[width=0.45\textwidth]{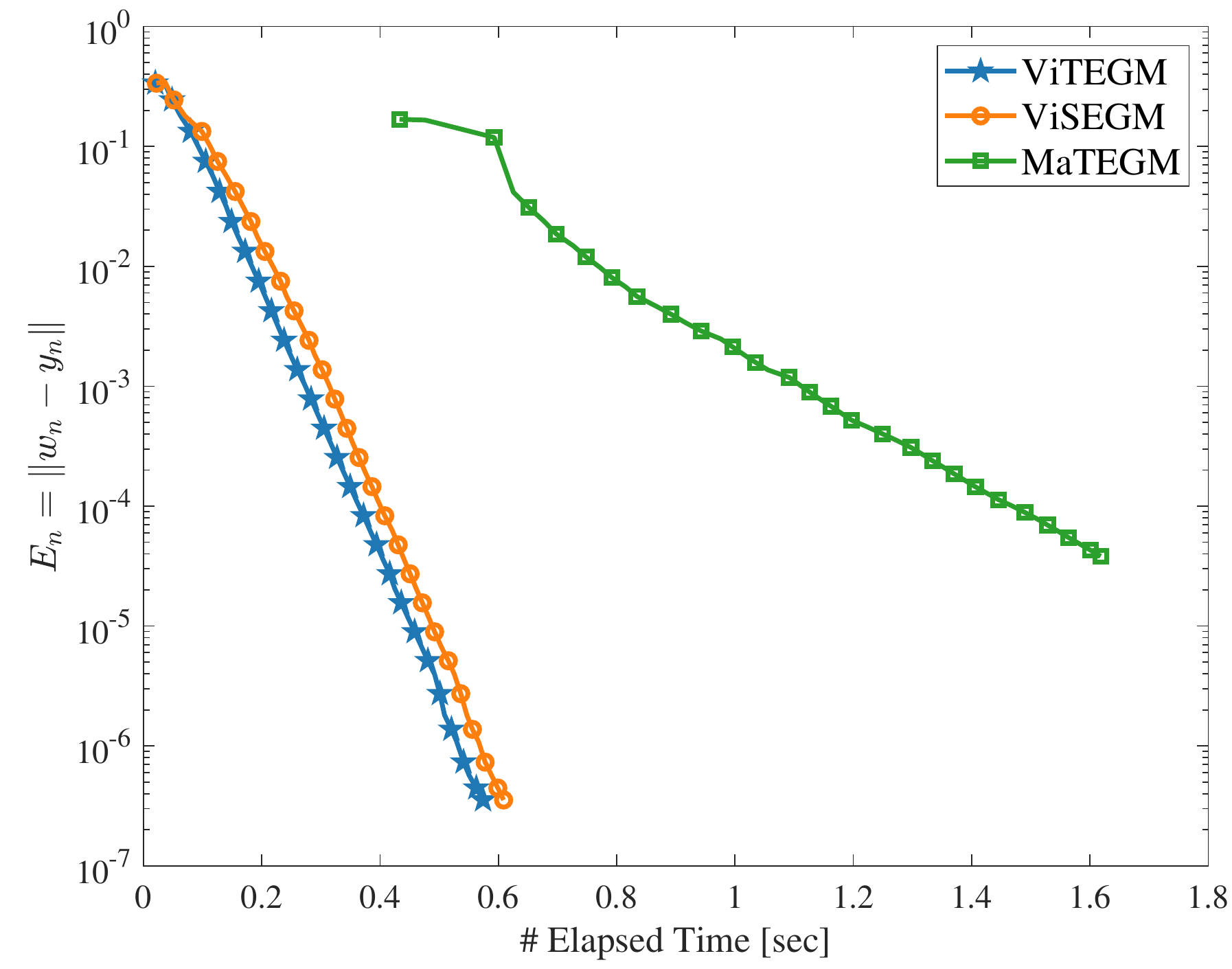}}
\caption{Numerical results for Example \ref{ex1} ($ m=5 $)}
\label{ex1_data5}
\end{figure}
\begin{figure}[htbp]
\centering
\subfigure{
\includegraphics[width=0.45\textwidth]{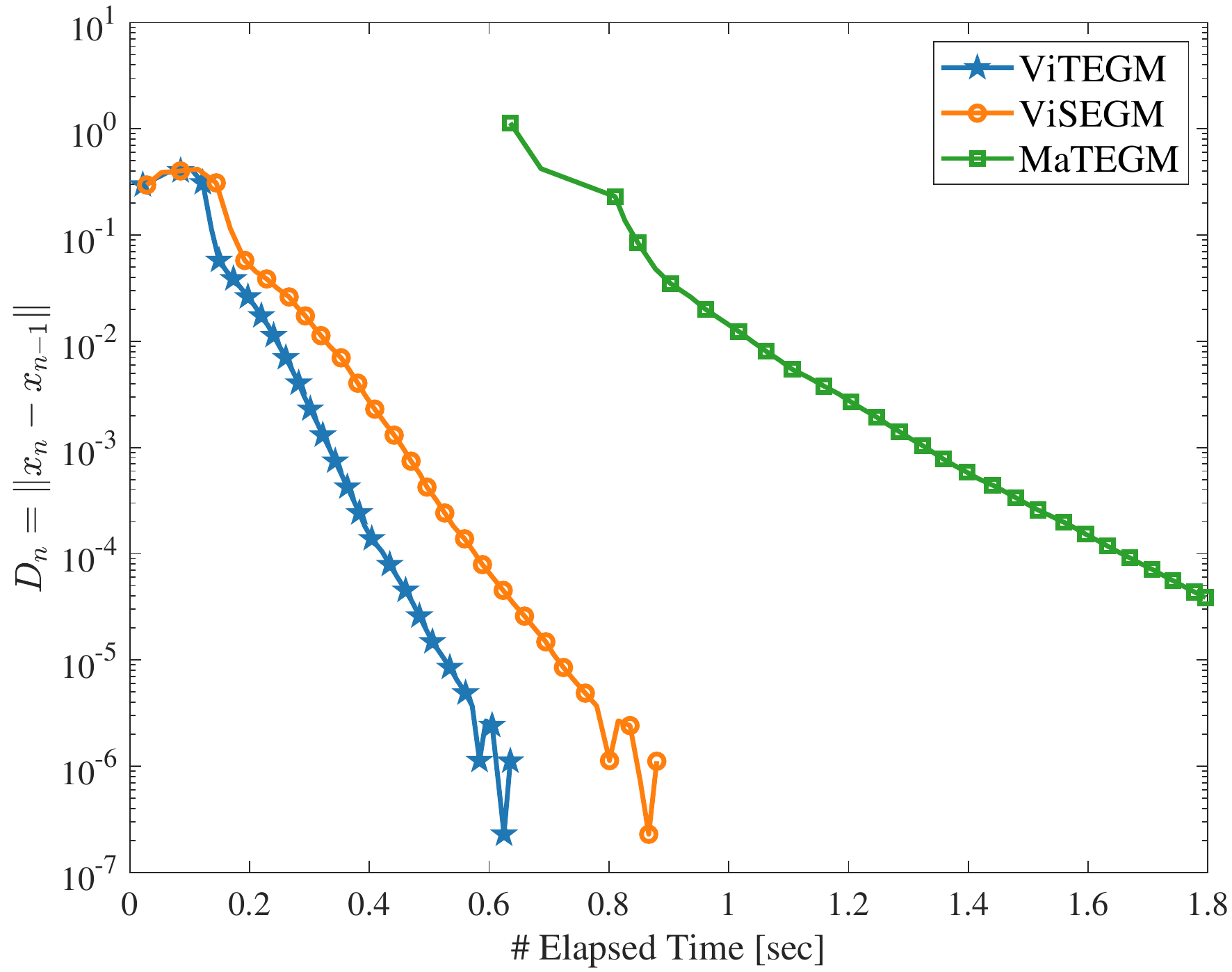}}
\subfigure{
\includegraphics[width=0.45\textwidth]{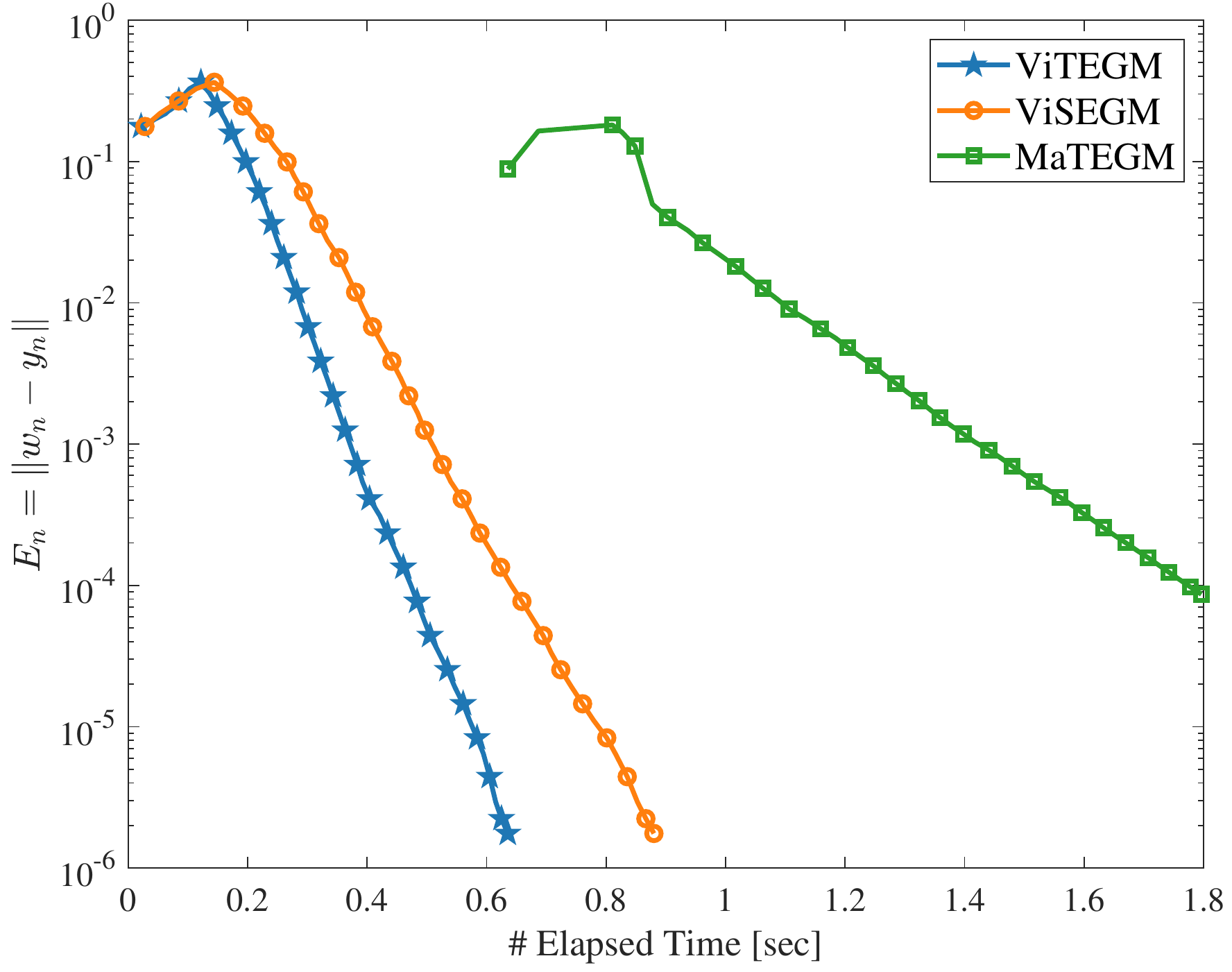}}
\caption{Numerical results for Example \ref{ex1} ($ m=10 $)}
\label{ex1_data10}
\end{figure}
\begin{figure}[htbp]
\centering
\subfigure{
\includegraphics[width=0.45\textwidth]{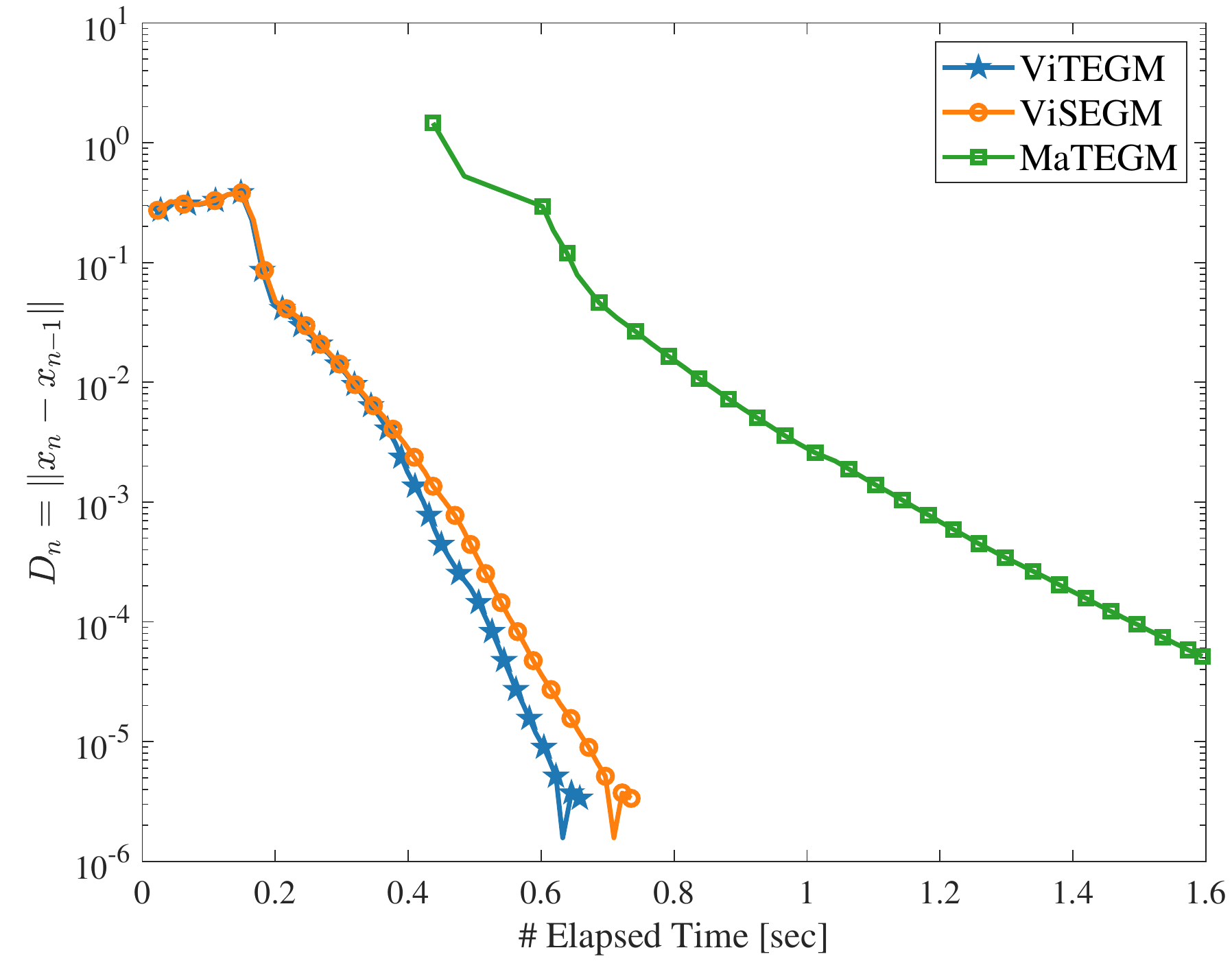}}
\subfigure{
\includegraphics[width=0.45\textwidth]{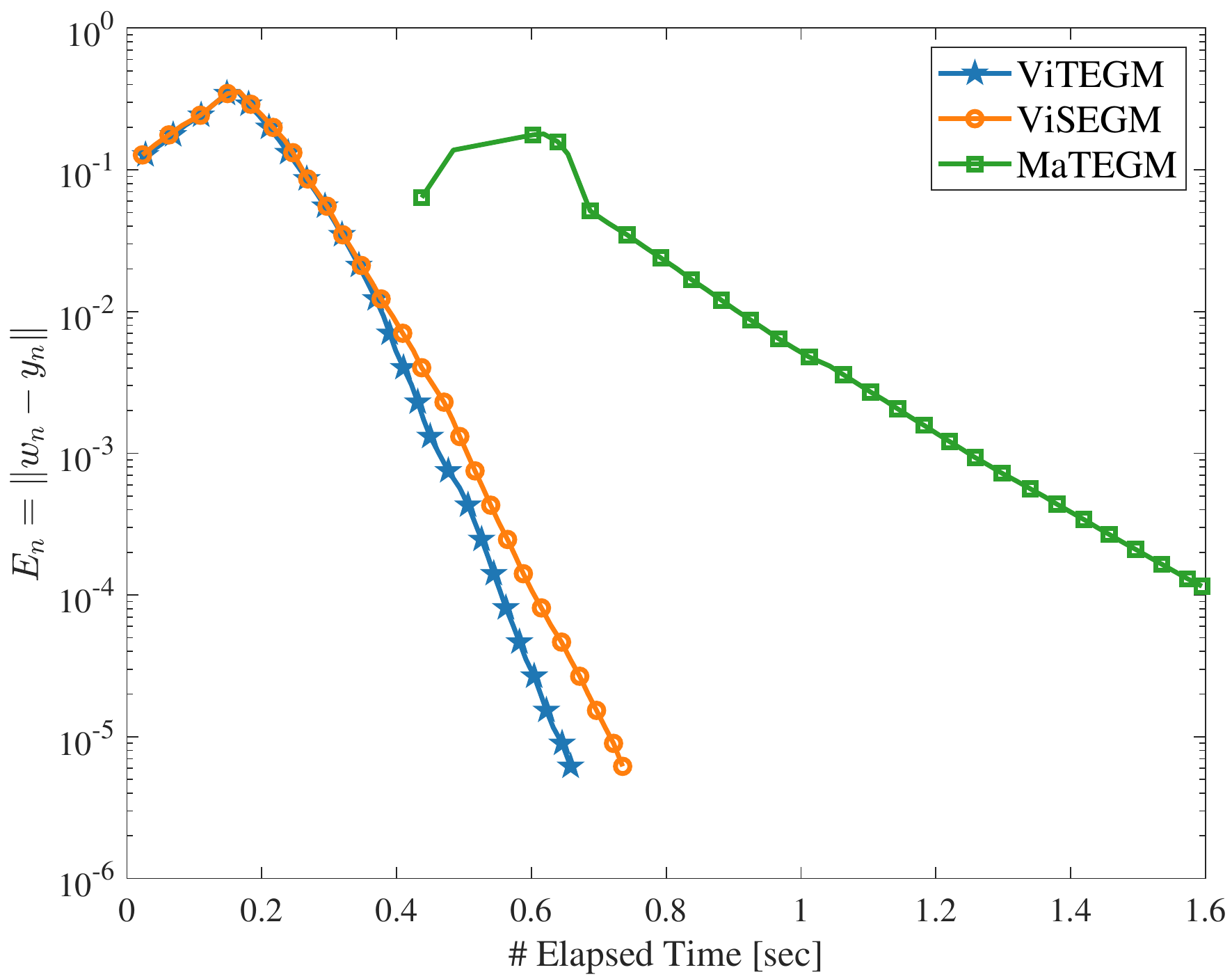}}
\caption{Numerical results for Example \ref{ex1} ($ m=15 $)}
\label{ex1_data15}
\end{figure}
\begin{figure}[htbp]
\centering
\subfigure{
\includegraphics[width=0.45\textwidth]{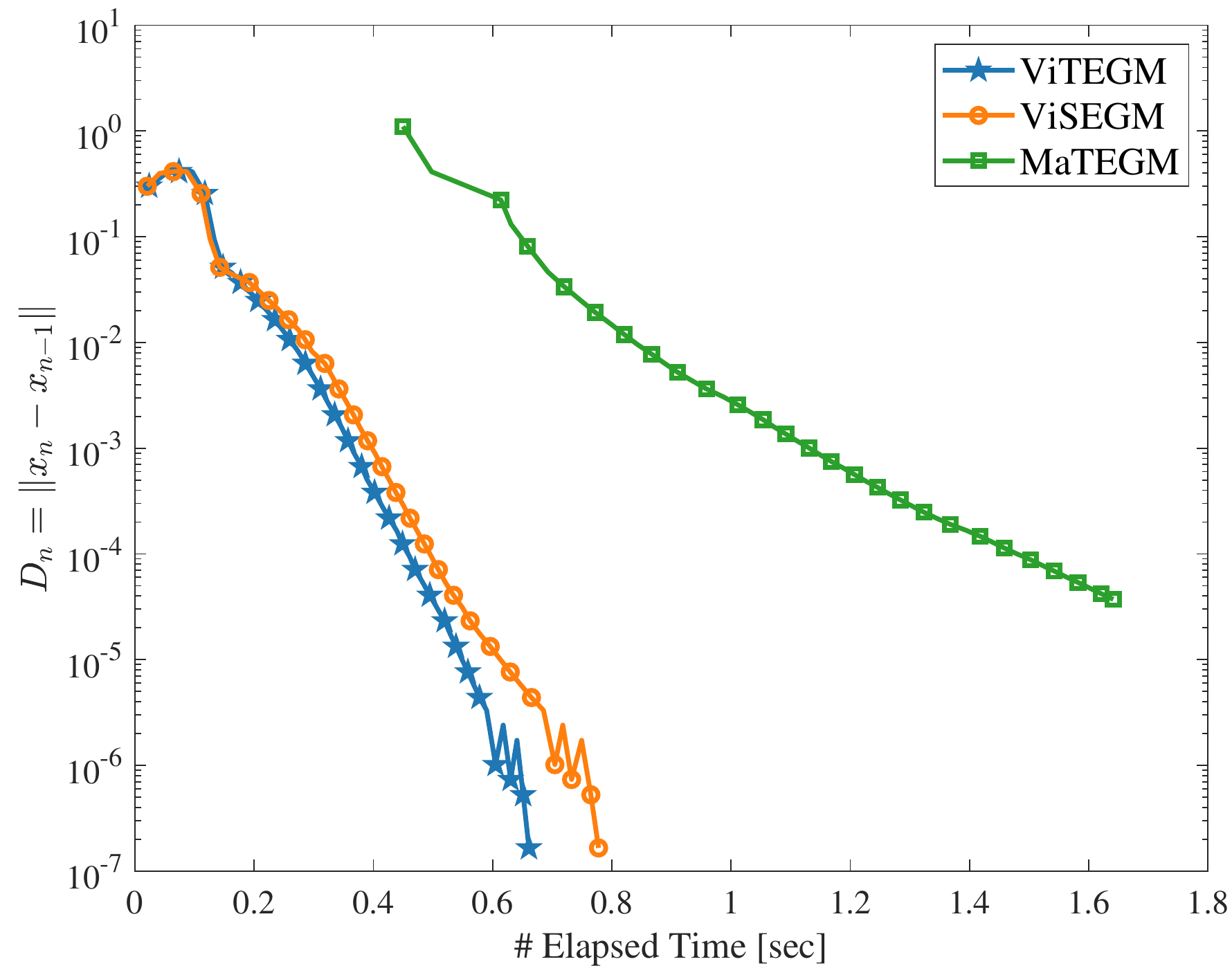}}
\subfigure{
\includegraphics[width=0.45\textwidth]{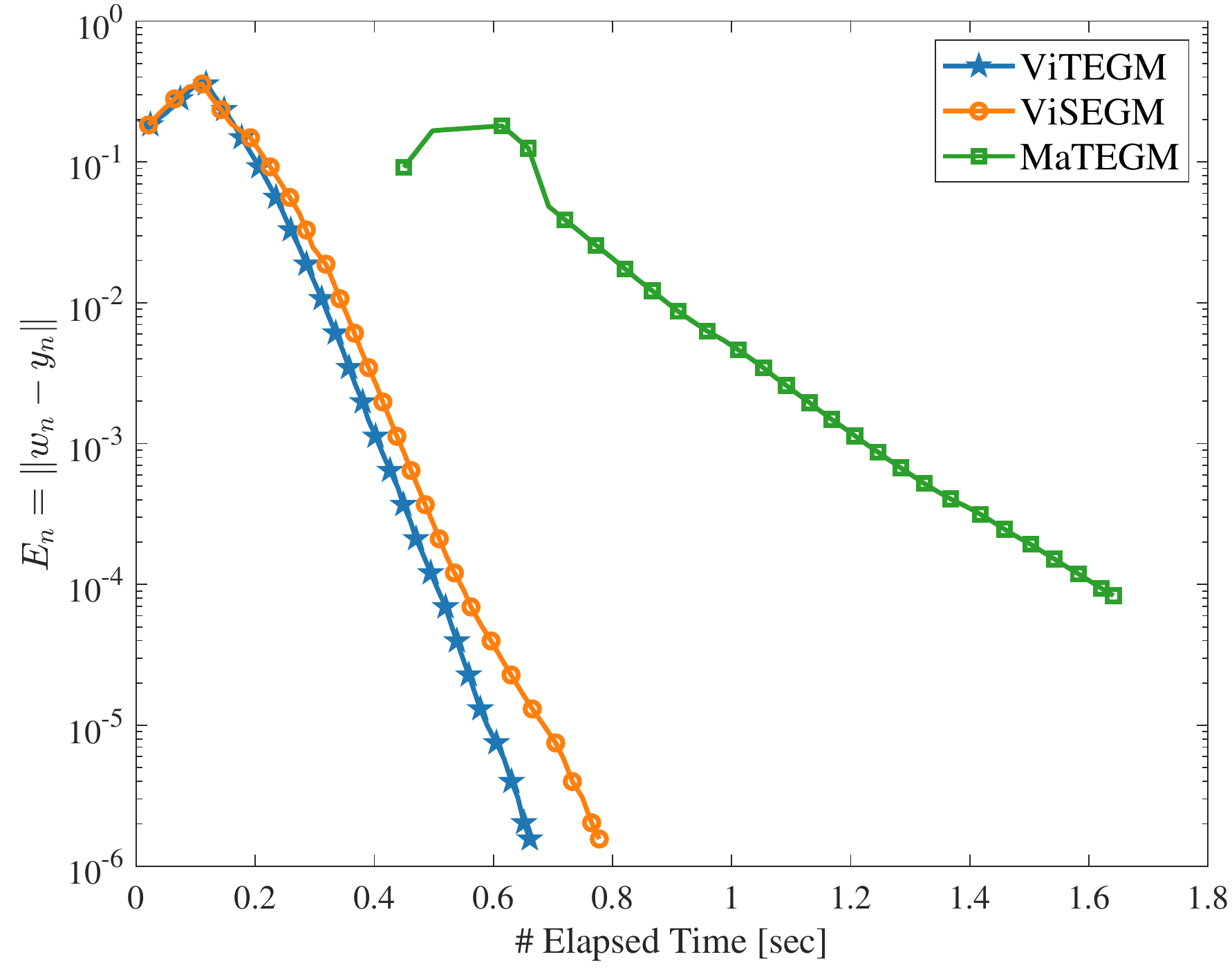}}
\caption{Numerical results for Example \ref{ex1} ($ m=20 $)}
\label{ex1_data20}
\end{figure}
\end{example}
\begin{example}\label{ex2}
In the second example, we consider the form of linear operator $ \mathcal{A}: R^{m}\rightarrow R^{m} $ ($ m=5,10,15,20 $) as follows: $\mathcal{A}(x)=Gx+g$, where $g\in R^{m}$ and $G=BB^{\mathsf{T}}+M+E$, matrix $B\in R^{m\times m}$, matrix $M\in R^{m\times m}$ is skew-symmetric, and matrix $E\in R^{m\times m}$ is diagonal matrix whose diagonal terms are non-negative (hence $ G $ is positive symmetric definite). We choose the feasible set as $C=\left\{x \in {R}^{m}:-2 \leq x_{i} \leq 5, \, i=1, \ldots, m\right\}$.  We get that mapping $\mathcal{A}$ is strongly pseudomonotone and Lipschitz continuous.  In this numerical example, both $B, M$ entries are randomly created in $[-2,2]$, $E$ is generated randomly in $[0,2]$ and $ g = \mathbf{0} $. It can be easily seen that the solution to the problem is $ x^{*}=\{\mathbf{0}\} $. The maximum iteration $ 1000 $ as a common stopping criterion and the initial values $ x_{0} = x_{1} $ are randomly generated by \emph{rand(m,1)} in MATLAB. The numerical results with elapsed time are described in Fig.~\ref{ex2_res}.
\begin{figure}[htbp]
\centering
\subfigure[$ m=5 $]{
\includegraphics[width=0.45\textwidth]{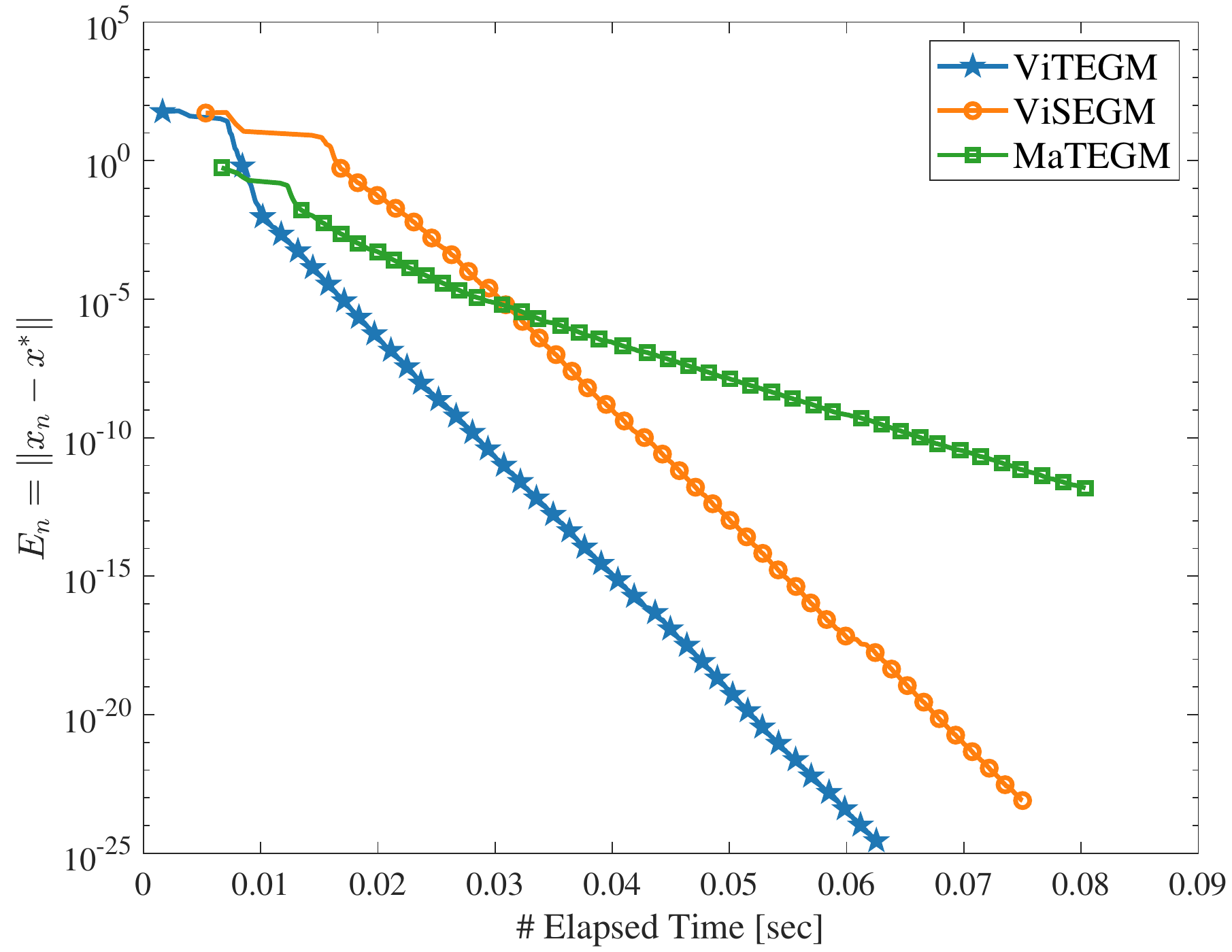}}
\subfigure[$ m=10 $]{
\includegraphics[width=0.45\textwidth]{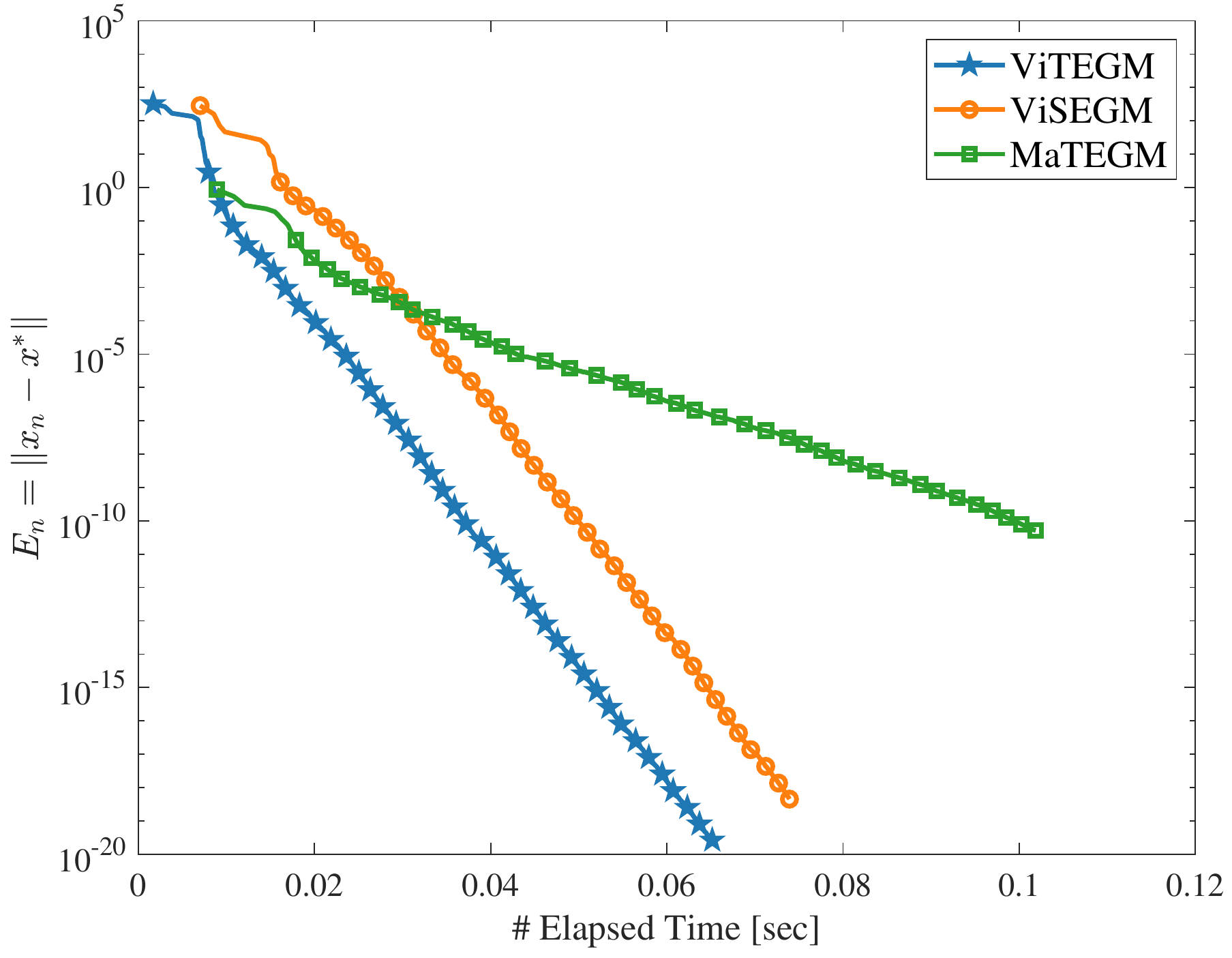}}
\subfigure[$ m=15 $]{
\includegraphics[width=0.45\textwidth]{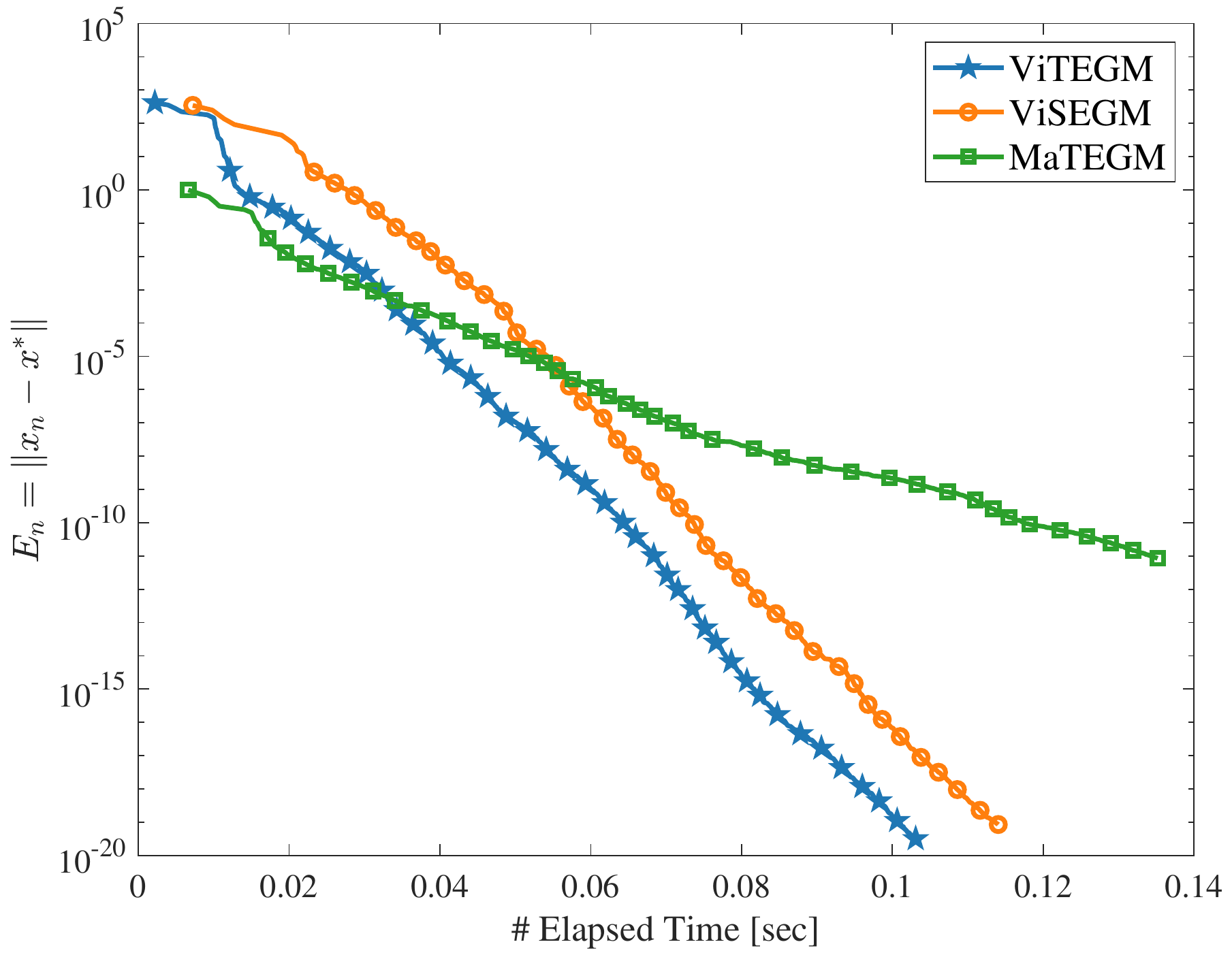}}
\subfigure[$ m=20 $]{
\includegraphics[width=0.45\textwidth]{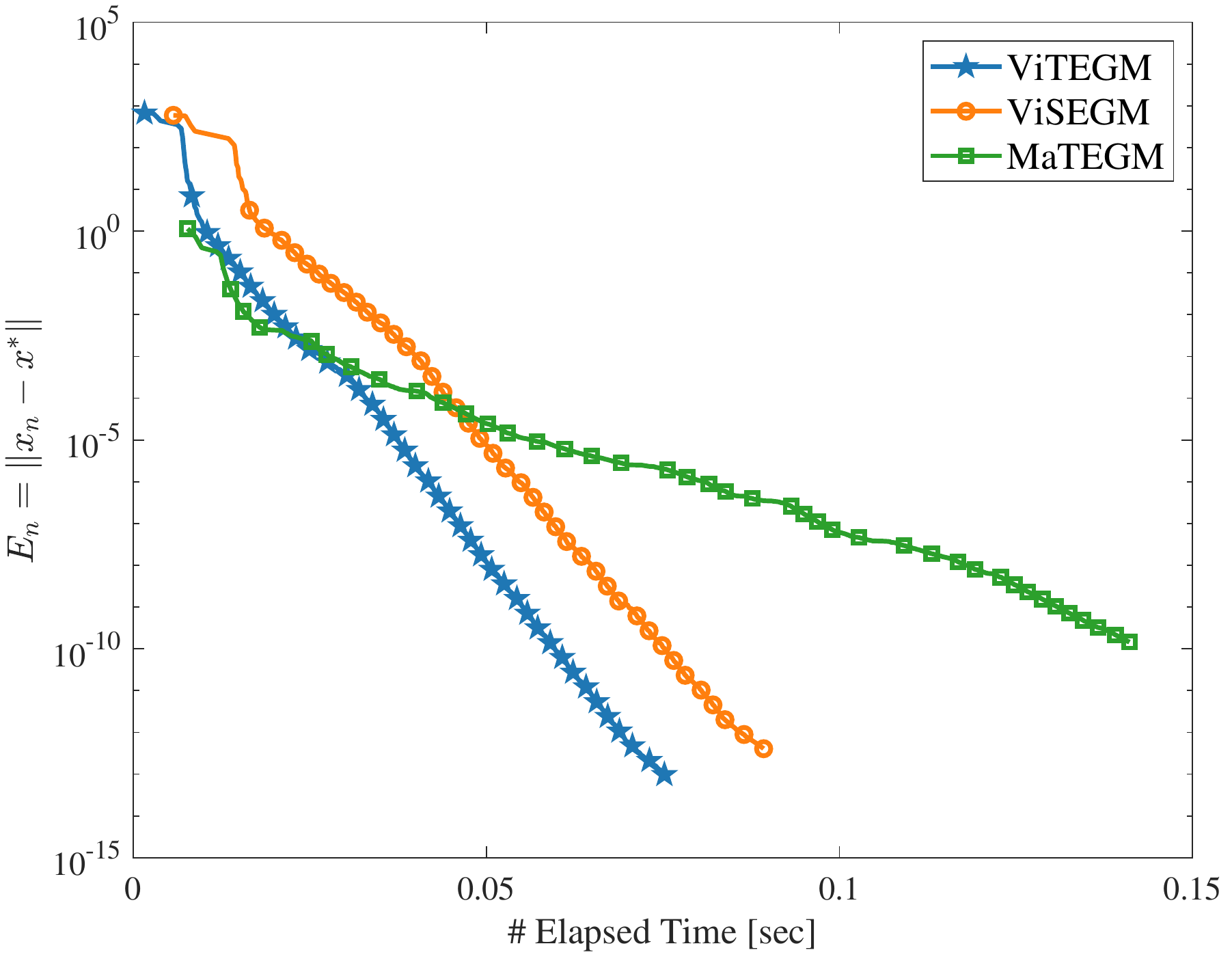}}
\caption{Numerical results for Example \ref{ex2}}
\label{ex2_res}
\end{figure}
\end{example}

\begin{example}\label{ex3}
Finally, we focus on a case in Hilbert space $H=L^{2}[0,1]$ with inner product
\[
\langle x, y\rangle=\int_{0}^{1} x(t) y(t) \mathrm{d} t\,,
\] and norm
\[
\|x\|=(\int_{0}^{1} x(t)^{2} \mathrm{d} t)^{1 / 2}\,.
\]
Let $b$ and $B$ be two positive numbers such that $B/(m+1)<b/m<b<B$ for some $m>1$. We select the feasible set as $C=\{x \in H:\|x\| \leq b\}$. The operator $\mathcal{A}: H \rightarrow H$ is of the form
\[
\mathcal{A}(x)=(B-\|x\|) x, \quad \forall x \in H\,.
\]
It should be pointed out that operator $\mathcal{A}$ is not monotone. Indeed, take a particular pair $(x^{\ddagger}, mx^{\ddagger})$, we pick $x^{\ddagger} \in C$ to satisfy $B /(m+1)<\|x^{\ddagger}\|<b / m$, one can sees that  $m\|x^{\ddagger}\| \in C $. By a simple operation, we get
\[
\langle \mathcal{A}(x^{\ddagger})-\mathcal{A}(y^{\ddagger}), x^{\ddagger}-y^{\ddagger}\rangle=(1-m)^{2}\|x^{\ddagger}\|^{2}(B-(1+m)\|x^{\ddagger}\|)<0\,.
\]
Hence, the operator $\mathcal{A}$ is not monotone on $C$. Next, we show that $\mathcal{A}$ is pseudomonotone. Indeed, one assumes that $\langle \mathcal{A}(x), y-x\rangle \geq 0, \forall x, y \in C$, that is, $\langle(B-\|x\|) x, y-x\rangle \geq 0$. From $\|x\|<B $, we get that $\langle x, y-x\rangle \geq 0$. Therefore, we can show that
\[
\begin{aligned}
\langle \mathcal{A}(y), y-x\rangle &=\langle(B-\|y\|) y, y-x\rangle \\
& \geq(B-\|y\|)(\langle y, y-x\rangle-\langle x, y-x\rangle) \\
&=(B-\|y\|)\|y-x\|^{2} \geq 0\,.
\end{aligned}
\]
For the experiment, we take $B=1.5$, $b=1$, $m=1.1$. We know that the solution to the problem is $x^{*}(t)=0 $. The maximum iteration $ 50 $ as the stopping criterion.  Fig.~\ref{ex3_res} shows the behaviors of function $E_{n}=\left\|x_{n}(t)-x^{*}(t)\right\|$ formulated by all algorithms with four initial points $ x_{0}(t)=x_{1}(t) $ (Case I: $x_{1}(t)=t^2$, Case II: $x_{1}(t)=\cos(t)$, Case III: $x_{1}(t)=\sin(2t)$ and  Case IV: $x_{1}(t)=2^{t}$).
\begin{figure}[htbp]
\centering
\subfigure[Starting points $x_{1}(t)=t^2$]{
\includegraphics[width=0.45\textwidth]{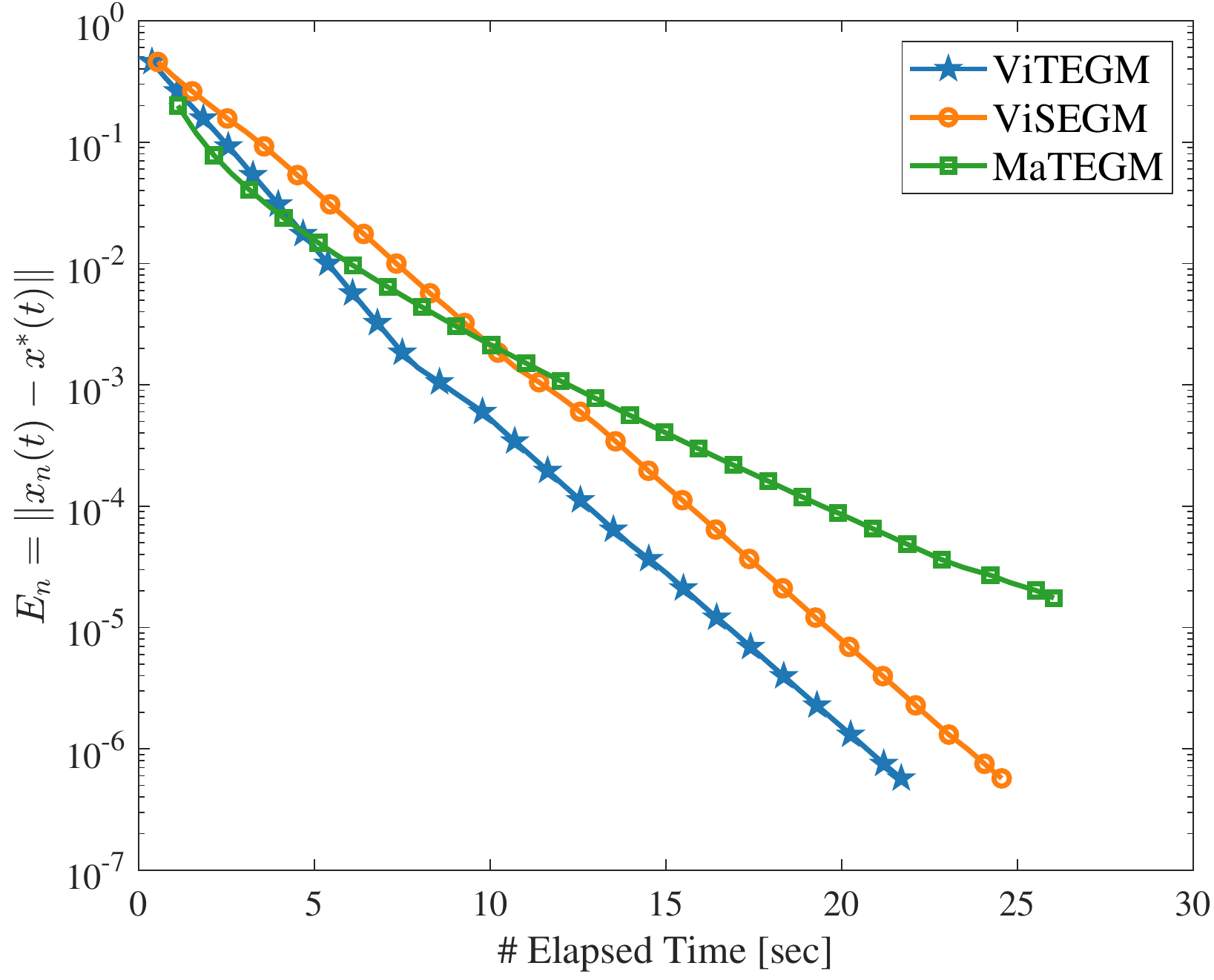}}
\subfigure[Starting points $x_{1}(t)=\cos(t)$]{
\includegraphics[width=0.45\textwidth]{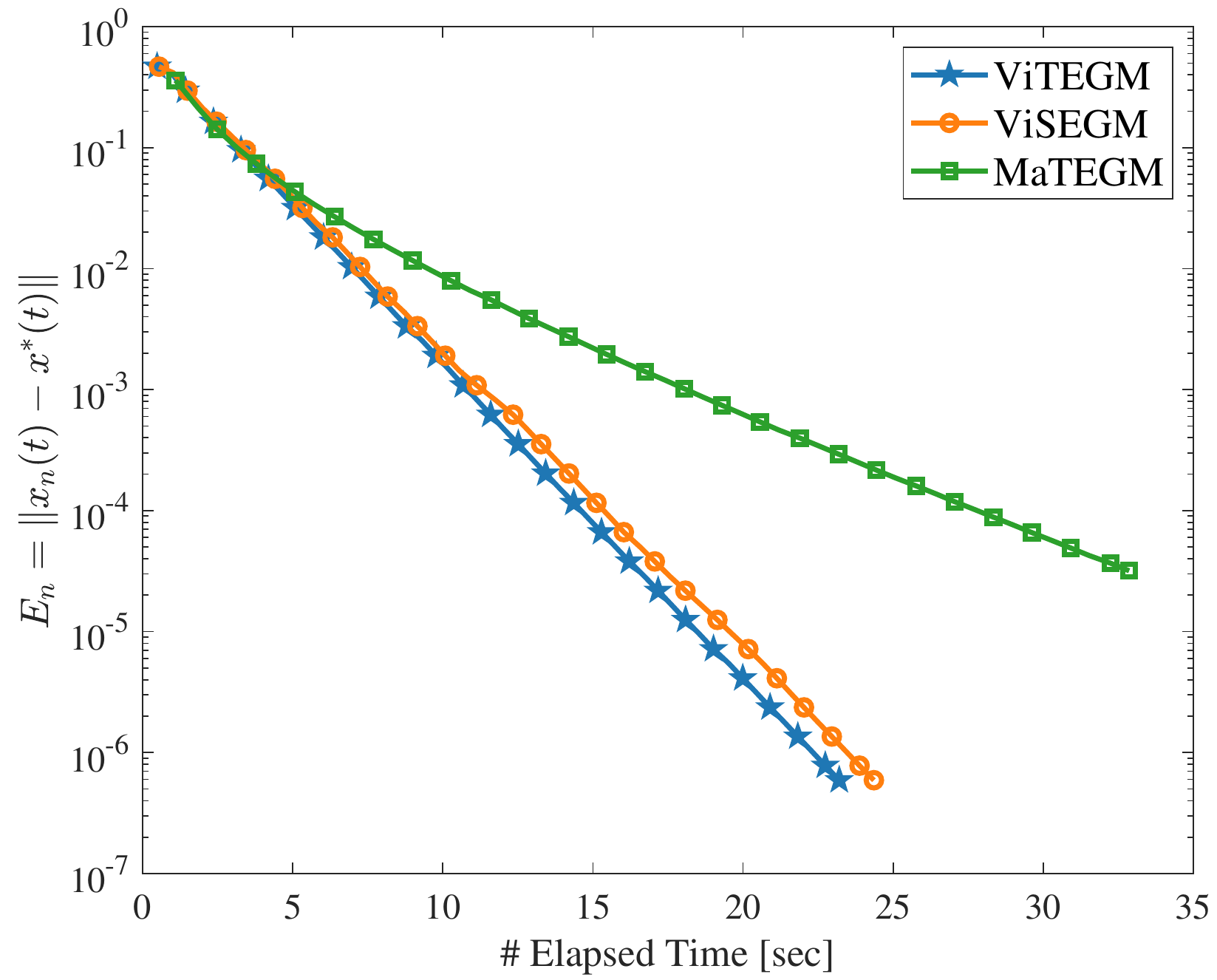}}
\subfigure[Starting points $x_{1}(t)=\sin(2t)$]{
\includegraphics[width=0.45\textwidth]{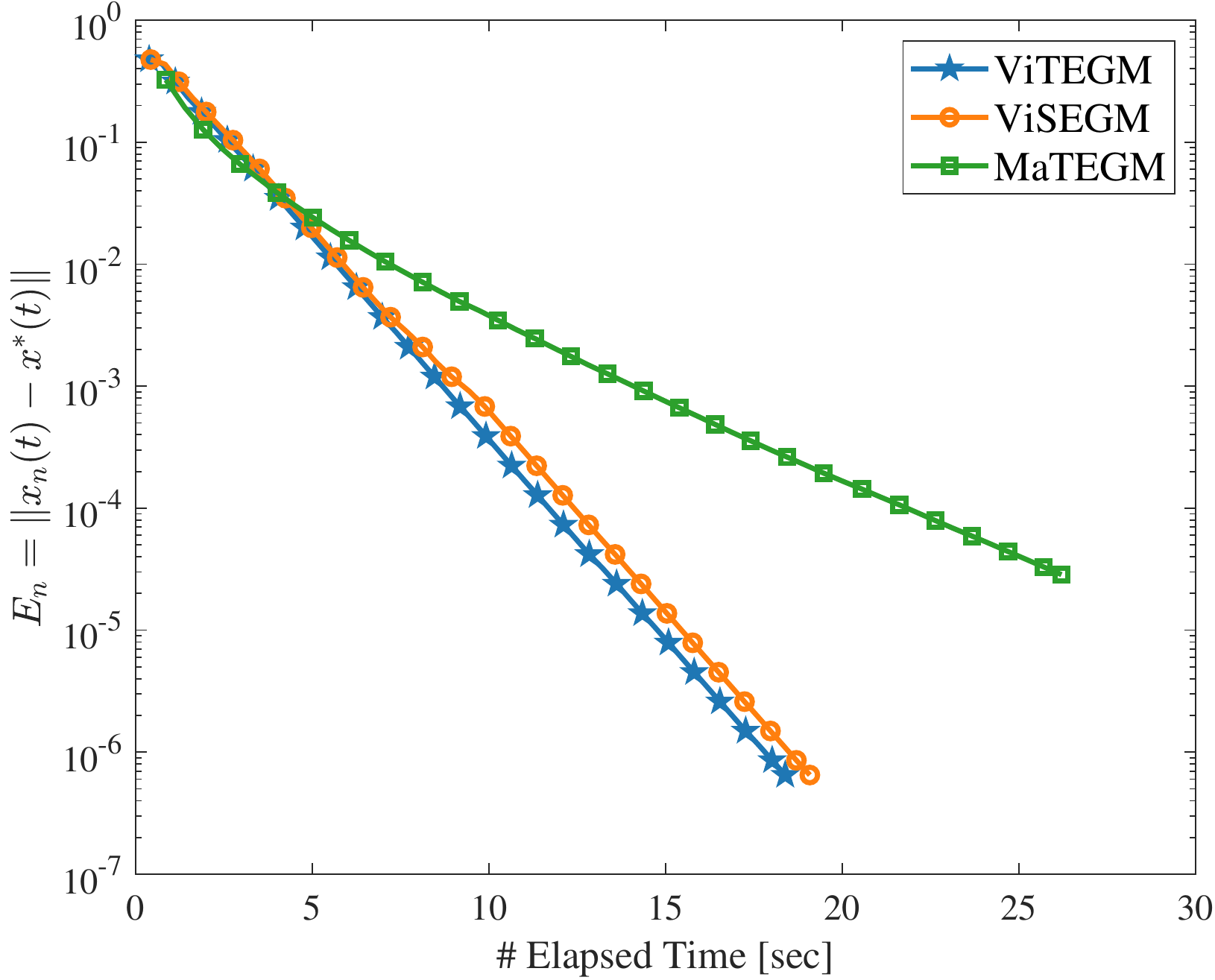}}
\subfigure[Starting points $x_{1}(t)=2^t$]{
\includegraphics[width=0.45\textwidth]{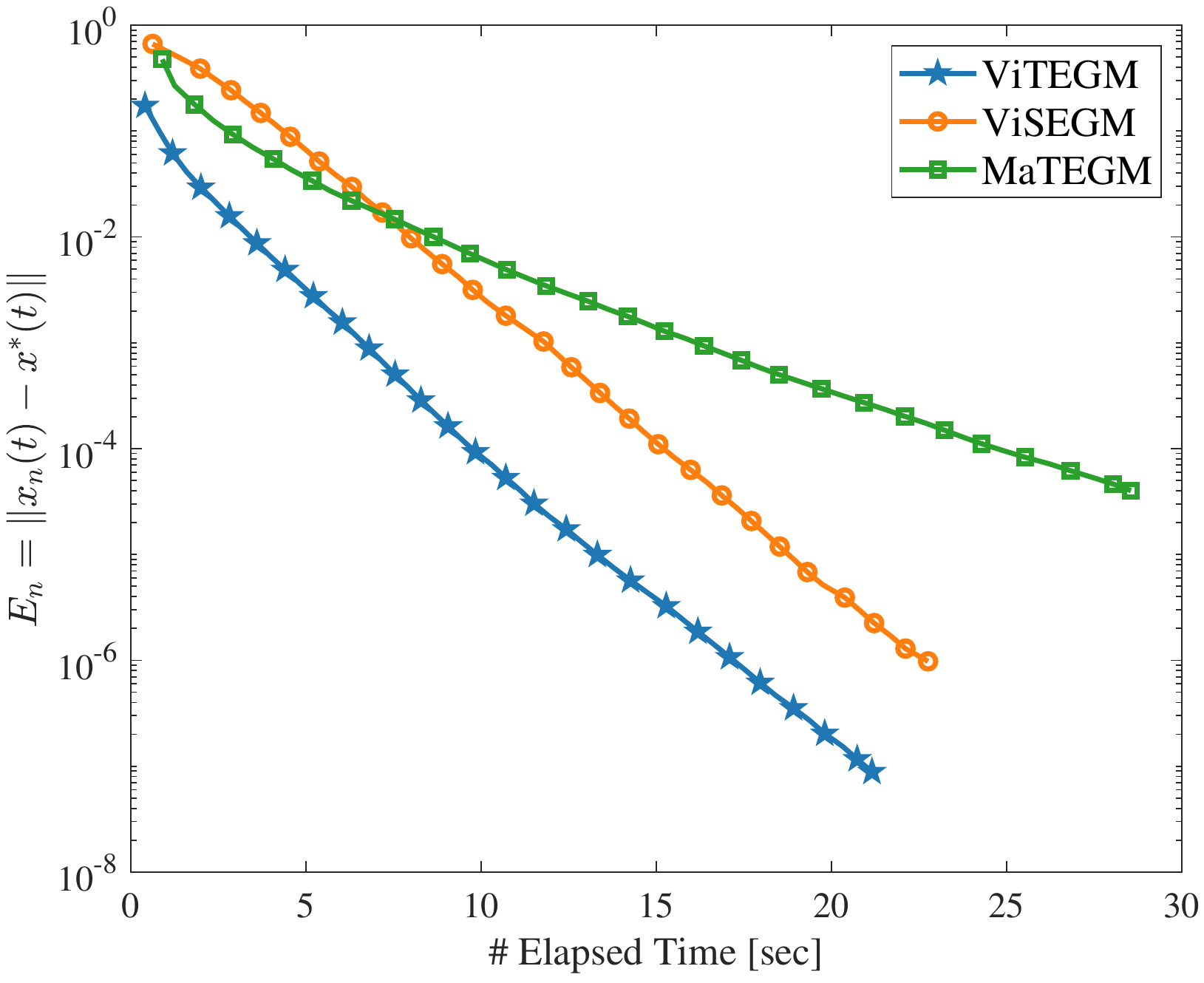}}
\caption{Numerical results for Example \ref{ex3}}
\label{ex3_res}
\end{figure}
\end{example}

\begin{remark}
\begin{enumerate}[label=(\arabic*)]
\item From Figs.\ref{ex1_data5}--\ref{ex3_res}, we can see that our proposed algorithm converges quickly and has better computational performance than the existing algorithms. In addition, these results are independent of the selection of initial values and the size of dimensions. Therefore, our algorithm is robust.
\item It should be emphasized that Algorithm~\eqref{MaTEGM} needs to spend more running time to achieve the same error accuracy because it uses Armoji-type rules to automatically update the step size, and this update criterion requires to calculate the value of operator~$ A $ many times in each iteration. However, our proposed Algorithm~\ref{alg1} uses previously known information to update the step size by a simple calculation in each iteration, which makes it converge faster.
\item It is noted that operator $ \mathcal{A} $ is pseudomonotone or strongly pseudomonotone in our numerical experiments. At this point, algorithms~\cite{THna,YLNA,FQOPT2020} for solving monotone \eqref{VIP} will not be available. Therefore, our proposed algorithm is more applicable for practical applications.
\end{enumerate}
\end{remark}

Next, we use our proposed Algorithm~\ref{alg1} to solve the \eqref{VIP} that appears in optimal control problems. Recently, many scholars have proposed different methods to solve it. We recommend readers to refer to \cite{PV,VS2019,HSM2020} for the algorithms and detailed description of the problem.
\begin{example}[Control of a harmonic oscillator, see~\cite{PSV}]\label{ex41}
\[
\begin{aligned}
\text{minimize} \;\;\; &x_{2}(3 \pi)\\
\text{subject to} \;\;\; & \dot{x}_{1}(t)=x_{2}(t)\,, \\
\;\;\;& \dot{x}_{2}(t) =-x_{1}(t)+u(t), \;\;\forall t \in[0,3 \pi]\,, \\
\;\;\;& x(0) =0\,, \\
\;\;\;& u(t) \in[-1,1]\,.
\end{aligned}
\]
The exact optimal control of Example~\ref{ex41} is known:
\[
u^{*}(t)=\left\{\begin{aligned}
1, \quad & \text { if }\; t \in[0, \pi / 2) \cup(3 \pi / 2,5 \pi / 2)\,; \\
-1, \quad & \text { if }\; t \in(\pi / 2,3 \pi / 2) \cup(5 \pi / 2,3 \pi]\,.
\end{aligned}\right.
\]
Our parameters are set as follows:
\[
N=100, \phi=0.1, \gamma_{1}=0.4,  \delta=0.3,  \epsilon_{n}=\frac{10^{-4}}{(n+1)^2}, \varphi_{n}=\frac{10^{-4}}{n+1}, f(x)=0.1x\,.
\]
The initial controls $ u_{0}(t)=u_{1}(t) $ are randomly generated in $ [-1,1] $, and the stopping criterion is $\left\|u_{n+1}-u_{n}\right\| \leq 10^{-4} $ or maximum iteration $ 1000 $ times. After $ 122 $ iterations, Algorithm~\ref{alg1} took $ 0.059839 $ seconds to reach the required error accuracy. Fig.~\ref{ex3_fig} shows the approximate optimal control and the corresponding trajectories of Algorithm~\ref{alg1}.
\begin{figure}[htbp]
\centering
\subfigure[Initial and optimal controls]{
\includegraphics[width=0.45\textwidth]{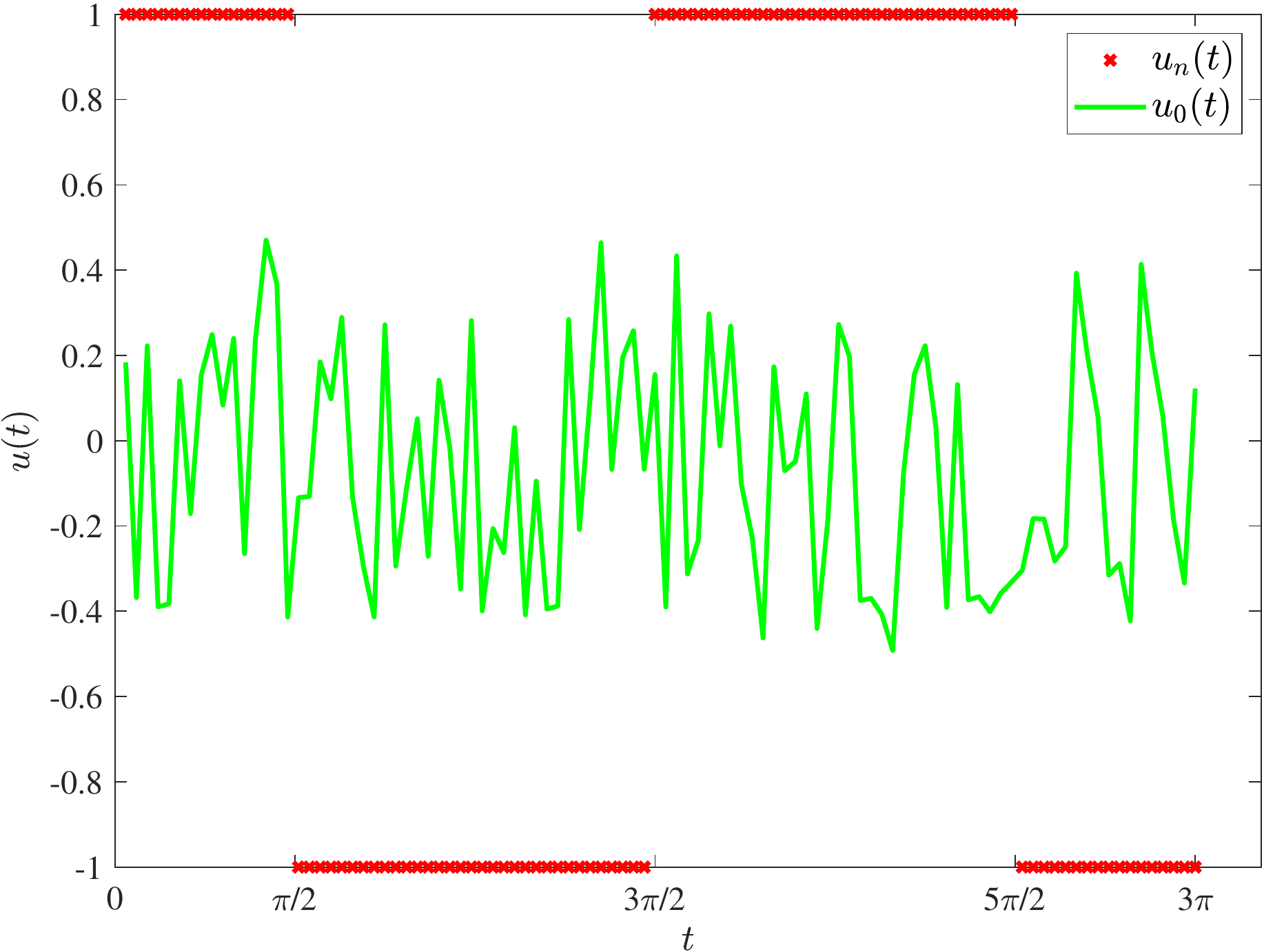}}
\subfigure[Optimal trajectories]{
\includegraphics[width=0.45\textwidth]{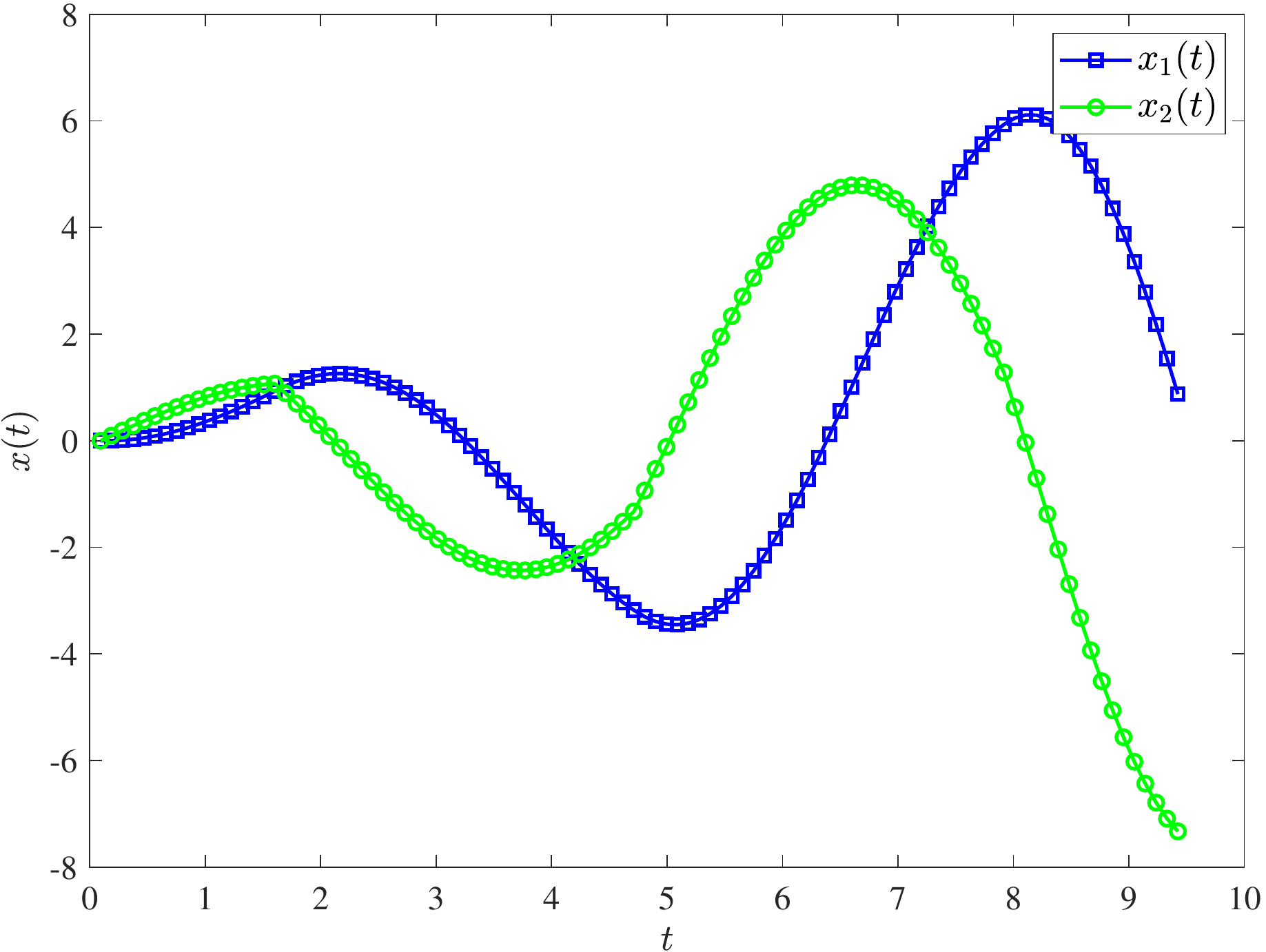}}
\caption{Numerical results for Example~\ref{ex41}}
\label{ex3_fig}
\end{figure}
\end{example}

We now consider an example in which the terminal function is not linear.
\begin{example}[See~\cite{BP2007}]\label{ex42}
\[
\begin{aligned}
\text{minimize} \;\;\; & -x_{1}(2)+\left(x_{2}(2)\right)^{2}\,, \\
\text{subject to} \;\;\; & \dot{x}_{1}(t)=x_{2}(t)\,, \\
\;\;\; & \dot{x}_{2}(t)=u(t), \;\;\forall t \in[0,2]\,, \\
\;\;\; & x_{1}(0)=0, \;\;x_{2}(0)=0\,, \\
\;\;\; & u(t) \in[-1,1]\,.
\end{aligned}
\]
The exact optimal control of Example~\ref{ex42} is
\[
u^{*}(t)=\left\{\begin{aligned}
1 \quad& \text { if }\; t \in[0,1.2)\,; \\
-1 \quad& \text { if }\; t \in(1.2,2]\,.
\end{aligned}\right.
\]
In this example, the parameters of our algorithm are set the same as in Example~\ref{ex41}. After the maximum allowable iteration of $1000$ times, Algorithm~\ref{alg1} took $0.39932$ seconds, but the required error accuracy was not achieved. Reaching the allowable error range may require more iterations. The approximate optimal control and the corresponding trajectories of Algorithm~\ref{alg1} are plotted in Fig.~\ref{ex4_fig}.
\begin{figure}[htbp]
\centering
\subfigure[Initial and optimal controls]{
\includegraphics[width=0.45\textwidth]{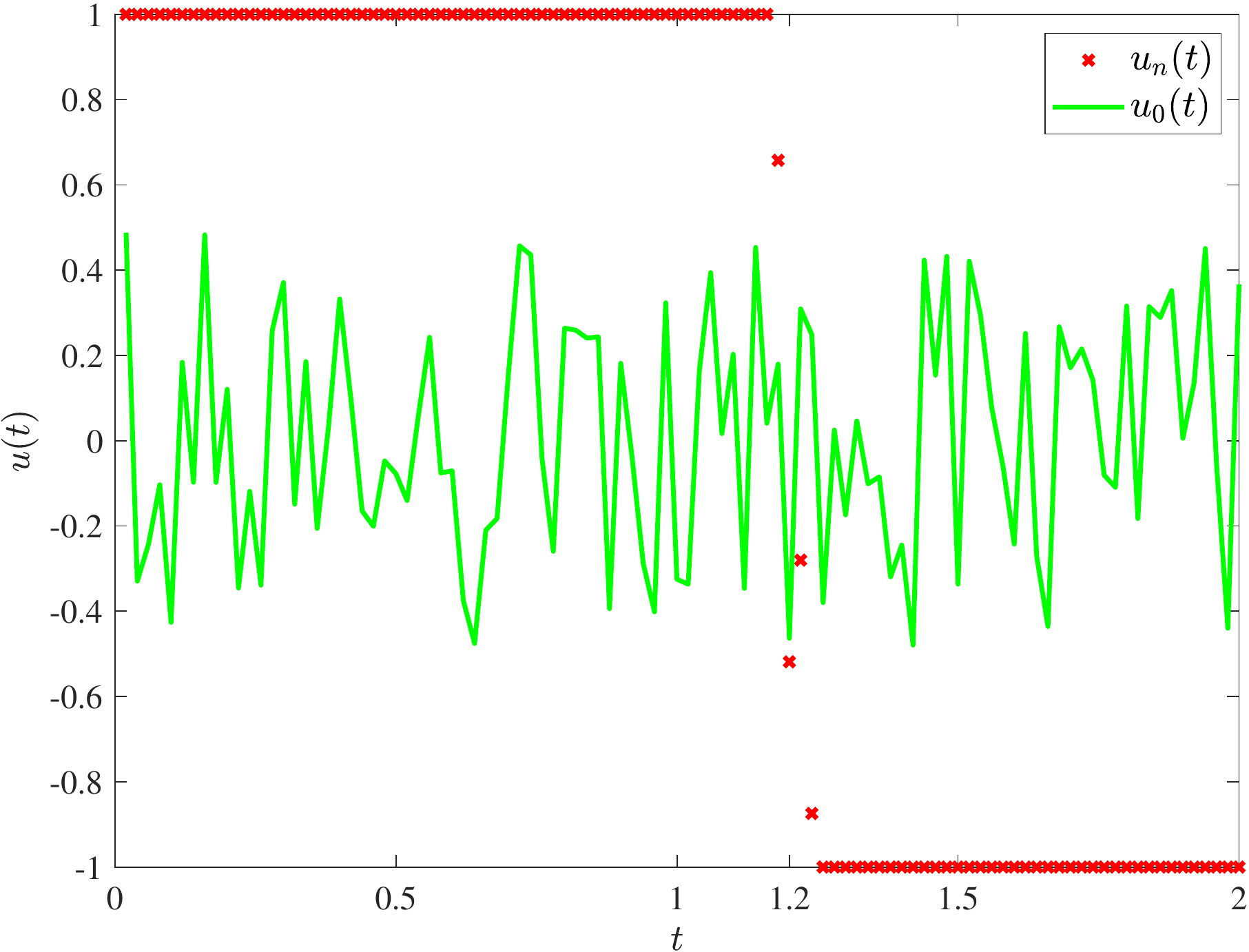}}
\subfigure[Optimal trajectories]{
\includegraphics[width=0.45\textwidth]{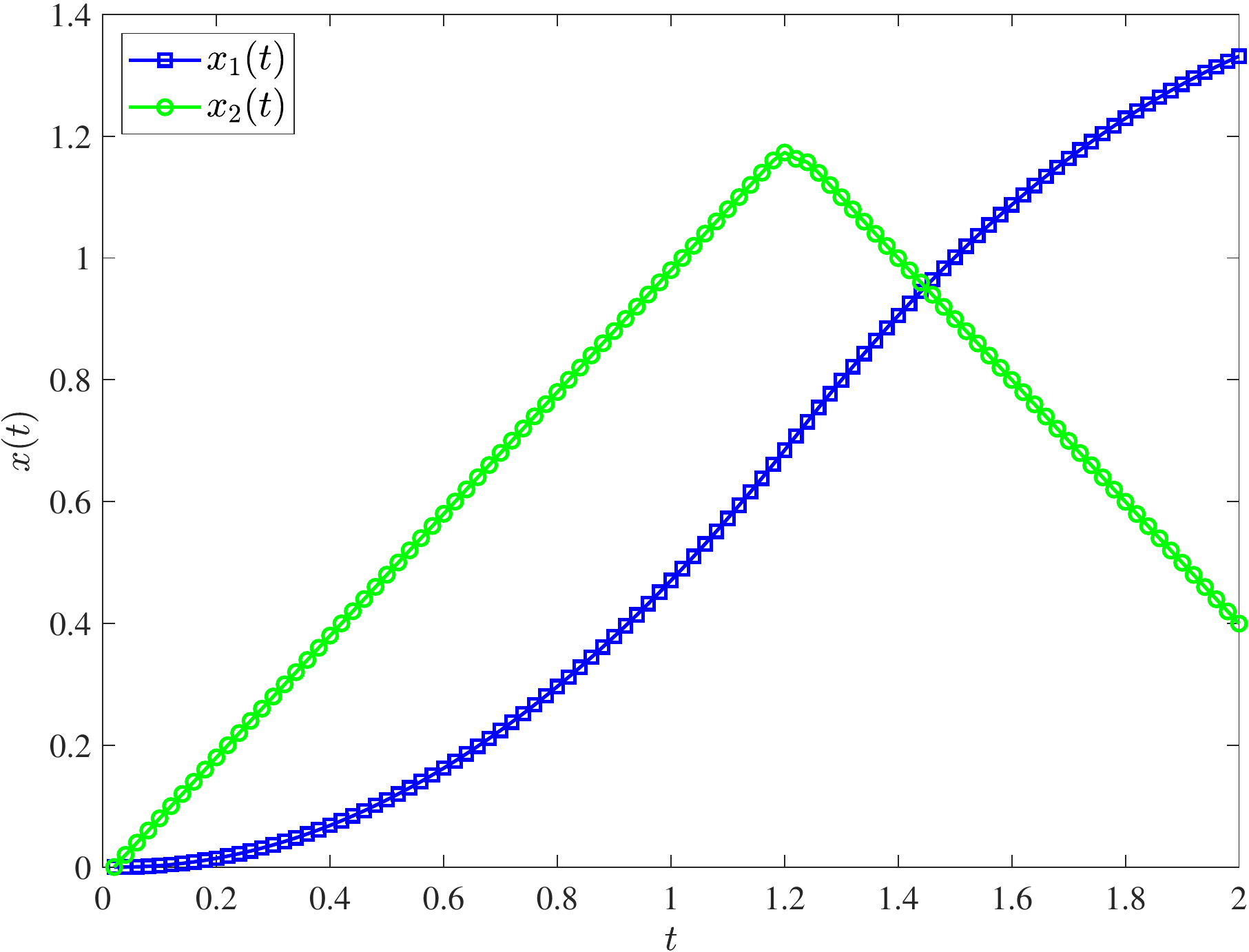}}
\caption{Numerical results for Example~\ref{ex42}}
\label{ex4_fig}
\end{figure}
\end{example}

\begin{remark}
As can be seen from Examples~\ref{ex41} and \ref{ex42}, the algorithm proposed in this paper can work well on optimal control problems.  It should be pointed out that our proposed algorithm can work better when the terminal function is linear rather than nonlinear (cf. Figs.~\ref{ex3_fig} and \ref{ex4_fig}).
\end{remark}

\section{The conclusion}\label{sec5}
In this paper, based on the inertial method, the Tseng's extragradient method and the viscosity method, we introduced a new extragradient algorithm to solve the pseudomonotone variational inequality   in a Hilbert space. The main benefit of the suggested method is that only one projection needs to be calculated in each iteration. The convergence of the algorithm was proved without the prior information of the Lipschitz constant of the mapping. Moreover, our algorithm adds an inertial term, which greatly improves the convergence speed of the algorithm. Our numerical experiments showed that the proposed algorithm improves some results of the existing algorithms in the literature. As an application, the variational inequality problem in the optimal control problem was also studied.

\end{document}